\newcommand {\emptycomment}[1]{}
\newcommand{\lon }{\,\rightarrow\,}
\newcommand{\be }{\begin{equation}}
\newcommand{\ee }{\end{equation}}
\newcommand{\defbe}{\triangleq}
\newcommand{\LieDerivation}{\mathfrak L}
\newcommand{\Real}{\mathbb R}
\newcommand{\CWM}{C^{\infty}(M)}
\newcommand{\set}[1]{\left\{#1\right\}}
\newcommand{\frkd}{\mathfrak d}
\newcommand{\frkj}{\mathfrak j}
\newcommand{\frkr}{\mathfrak r}
\newcommand{\frky}{\mathfrak y}
\newcommand{\frkD}{\mathfrak D}
\newcommand{\frkJ}{\mathfrak J}
\newcommand{\frkL}{\mathfrak L}
\newcommand{\half}{\frac{1}{2}}
\newcommand{\conpairing}[1]{\left\langle  #1\right\rangle }
\newcommand{\Dorfman}[1]{\{ #1\}}
\newcommand{\jet}{\mathfrak{J}}
\newcommand{\jetd}{\mathbbm{d}}
\newcommand{\dev}{\mathfrak{D}}
\newcommand{\Id}{\mathrm{Id}}
\newcommand{\e}{\mathbbm{e}}
\newcommand{\p}{\mathbbm{p}}
\newcommand{\id}{\mathbbm{i}}
\newcommand{\jd}{\mathbbm{j}}
\newcommand{\omni}{\mathcal{E}}
\newcommand{\Hom}{\mathrm{Hom}}
\newcommand{\gl}{\mathfrak {gl}}
\title{Higher omni-Lie algebroids}                                     
\author{Yanhui Bi
\footnote{Research partially supported by NSFC (11601219), and Jiangxi' Natural Science Foundation (20161BAB211003)},
Luca Vitagliano
\footnote{LV is member of the GNSAGA of INdAM},
Tao Zhang
\footnote{Corresponding author. Research partially supported by NSFC (11501179)}}                 
\keywords{Vector bundle form, Higher omni-Lie algebroid, Dirac structure, Generalized geometry}         
\address{%
Yanhui Bi\\               
College of Mathematics and Information Science, Nanchang Hangkong University, Nanchang 330063, PR China\\            
biyanhui0523@gmail.com                
}
\address{%
Luca Vitagliano\\               
DipMat, Universit\`a degli Studi di Salerno, via Giovanni Paolo II n${}^\circ$ 123, 84084 Fisciano (SA) Italy\\            
lvitagliano@unisa.it               
}
\address{%
Tao Zhang\\               
College of Mathematics and Information Science, Henan Normal University, Xinxiang 453007, PR China\\            
zhangtao@htu.edu.cn               
}
\begin{document}


\maketitle

\begin{abstract}
We propose a definition of a ``higher'' version of the omni-Lie algebroid and study its isotropic and involutive subbundles. Our higher omni-Lie algebroid is to (multi)contact and related geometries what the higher generalized tangent bundle of Zambon and Bi/Sheng is to (multi)symplectic and related geometries.   
\end{abstract}

\section{Introduction}

The \emph{generalized tangent bundle} of a smooth manifold is the direct sum $TM \oplus T^\ast M$. It is the natural arena for various interesting geometries, e.g.~Dirac geometry, and generalized complex geometry. The generalized tangent bundle is an instance of a \emph{Courant algebroid}. The \emph{omni-Lie algebroid} of a vector bundle $E \to M$ is the direct sum $\mathfrak D E \oplus \mathfrak J E$, where $\mathfrak D E$ is the \emph{gauge algebroid}, and $\mathfrak J E$ is the first jet bundle of $E$. The omni-Lie algebroid of a line bundle is the natural arena for Dirac-Jacobi geometry, and generalized complex geometry in odd dimensions \cite{Wade2000, Iglesias2004, Vitagliano, Vitagliano3, Vitagliano4}, and it is an instance of an $E$-Courant algebroid. Now, fix a positive integer $n$. There is an ``$n$-form version'' of the generalized tangent bundle, called the \emph{higher generalized tangent bundle}: $TM \oplus \wedge^n T^\ast M$. The higher generalized tangent bundle and its isotropic and involutive subbundles have been first considered in \cite{Bi2011,Zambon2012} (see also \cite{Y.Bi2015,BAR2016}). They encode higher Dirac structures, in particular closed $n$-forms and certain Nambu structures \cite{Zambon2012}.

It is then natural to ask: \emph{Is there an ``$n$-form version'' of the omni-Lie algebroid? And, if yes, what kind of geometries do its isotropic and involutive subbundles encode?} In this paper, we (partially) answer these questions.
We begin with what we call \emph{vector bundle forms} (see Subsection \ref{Sec:vect_form}). They are sections of a certain graded subbundle $\jet_{\bullet} E$ of the bundle $\Hom(\wedge^\bullet \dev E, E)$ of $E$-valued forms on the gauge algebroid. Vector bundle forms are for a vector bundle $E \to M$ what ordinary differential forms are for the trivial line bundle $M \times \mathbb R \to M$. With vector bundle forms at hand, we are able to define the \emph{higher omni-Lie algebroid} (see Subsection \ref{Sec:higher_omni}) as the direct sum $\mathfrak D E \oplus \jet_{n} E$ (for any fixed $n$). In the same way as vector bundle forms, higher omni-Lie algebroids are for a vector bundle what the higher generalized tangent bundles are for the trivial line bundle. Various interesting ``higher geometric structures'' appear as ``Dirac structures'' in the higher omni-Lie algebroid. For instance, it turns out that certain ``higher contact structures'' \cite{Vitagliano2} can be seen as isotropic involutive subbundles of the higher omni-Lie algebroid.

The paper is organized as follows. In Section 2, we introduce the concept of vector bundle forms and the higher omni-Lie algebroid of a vector bundle.
In section 3, we study isotropic and involutive subbundles of a higher omni-Lie algebroid which project isomorphically either on $\mathfrak D E$ or on $\jet_{n} E$.
In the last Section 4, we study higher omni-Lie algebroids for the spacial case when $E$ is the trivial line bundle.

 \section{Higher omni-Lie algebroids}

\subsection{Preliminaries: omni-Lie algebroids}

We begin recalling briefly the notion of omni-Lie algebroid  \cite{Chen-Liu}, which generalizes Weinstein's omni-Lie algebras \cite{A.Wein} to the realm of Lie algebroids. Given a vector bundle $E \to M$, let $\dev E$ be the \emph{gauge algebroid} of $E$. Recall that a section of $\dev E$ is a \emph{derivation} of $E$, i.e.~an $\mathbb R$-linear operator $\frkd : \Gamma (E) \to \Gamma (E)$ satisfying the following Leibniz rule:
\[
\frkd (f u) = \jd (\frkd) (f) u + f \frkd u, \quad \forall u \in \Gamma (E),
\]
for some, necessarily unique, vector field $\jd (\frkd) \in \mathfrak X (M)$. The gauge algebroid $\dev E$ is a Lie algebroid with Lie bracket given by the commutator of derivations, denoted $[-,-]_{\dev}$, and anchor given by the vector bundle map $\jd : \dev E \to TM$. The anchor $\jd$ is actually surjective, and its kernel $\gl (E)$ consists of vector bundle endomorphisms $\Phi : E \to E$ (covering the identity). Hence there is a short exact sequence
\begin{equation}\label{eq:SES_dev}
\xymatrix{0 \ar[r] & \gl(E)  \ar[r]^-{\id} &
                \dev{E}  \ar[r]^-{\jd} & TM \ar[r]  & 0
                }
\end{equation}
sometimes called the \emph{Atiyah sequence} of $E$. Notice that $\gl(E) $, hence $\dev E$, is equipped with a canonical section: the identity endomorphism $\Id_E : E \to E$.

Now denote by $\jet E$ the first jet bundle of $E$. Besides the usual definition, $\jet E$ can be equivalently defined from $\dev E$ as follows:
\begin{equation}
\jet E =
\set{\nu\in \Hom( \dev{E}  ,E ) :  \nu(\Phi)=\Phi\circ
\nu(\Id_E),\forall \Phi\in \gl(E )}  \subset \Hom( \dev{E}
,E ).
\end{equation}
In its turn, $\dev E$ is determined by $\jet E$:
\begin{equation}\label{eq:DE}
 {\dev E}  \cong \set{h\in \Hom( {\jet E} ,E ) : \exists
v \in T M, \mbox{ s.t. } h(\frky)=\frky(v), \forall \frky\in \Hom(TM,E) }.
\end{equation}
Given a vector bundle map $h : \jet E \to E$ as in \eqref{eq:DE}, the associated derivation $\frkd$ is given by
\[
\frkd u = h (\frkj u ), \quad u \in \Gamma (E),
\]
where $\frkj : \Gamma (E) \to \Gamma (\jet E)$ is the first jet prolongation.
The first jet bundle sits itself in a short exact sequence:
\begin{equation}\label{eq:SES_jet}
\xymatrix{0 \ar[r] & \Hom(TM,E)  \ar[r]^-{\e}
&
                {\jet}{E} \ar[r]^-{\p} & E \ar[r]  & 0,
                }
\end{equation}
where $\p : \jet E \to E$ is the usual projection, and $\e : \Hom (TM, E) \to \jet E$ is given by:
\[
\e (h) = h \circ \jd, \quad h \in \Hom (TM, E).
\]
We will often understand both the embeddings $\e$ and $\id$ when there is no risk of confusion.
We summarize the relationship between $\dev E$ and $\jet E$ with the $E$-valued, bilinear pairing
\[
\conpairing{-,-}_E : \jet E \otimes \dev E \to E, \quad (\nu, \frkd) \mapsto \nu (\frkd).
\]
Then $(\jd, \e)$ and $(\id, \p)$ are pairs of mutually adjoint maps with respect to $\conpairing{-,-}_E$.

\begin{remark}
There is yet another description of $\dev E$ and $\jet E$. Namely, given a vector bundle $E \to M$, the diagram
\begin{equation}\label{DVB}
\xymatrix{
TE \ar[r] \ar[d] & E \ar[d] \\
TM \ar[r] & M}
\end{equation}
is a double vector bundle (more precisely a VB-algebroid). Then sections of $\dev E$ identify with \emph{linear sections} of the horizontal bundle $TE \to E$, while sections of $\jet E$ identify with linear sections of the vertical bundle $TE \to TM$. Here, by \emph{linear section}, we mean those sections that are, additionally, vector bundle maps. Double vector bundle \eqref{DVB} is also equivalent to a certain vector bundle of graded manifolds. Namely, shifting by one the degree in the fibers of the horizontal bundles $TE \to E$, and $TM \to M$, we get the graded vector bundle
\begin{equation}\label{graded}
\begin{array}{c}
\xymatrix{
T[1] E \ar[d]\\
T[1] M}
\end{array},
\end{equation}
and sections of $\jet E$ do also identify with degree 0 sections of \eqref{graded}. For more details about VB-algebroids see \cite{M1998, GSM2010}. See also \cite{V2012} for the relationship between VB-algebroids and graded manifolds.
\end{remark}

The \emph{omni-Lie algebroid} of $E$ \cite{Chen-Liu, Chen-Liu-Sheng2} is the direct sum $\omni (E) = \dev E \oplus \jet E$. The main structures on $\omni (E)$ are:
\begin{itemize}
\item the projection $\rho : \omni (E) \to \dev E$ onto the first summand;
\item the (Dorfman-like) bracket $\{-,-\} : \Gamma (\omni (E)) \times \Gamma (\omni (E)) \to \Gamma (\omni (E))$ defined by:
\begin{equation}\label{eq:Dorf_1}
\{ \frkd + \mu, \frkr + \nu \} = [\frkd, \frkr]_\dev + \LieDerivation_{\frkd} \nu - \LieDerivation_{\frkr} \mu + \frkj \conpairing{\nu, \frkr}_E, \quad \forall\frkd + \mu,\, \frkr + \nu \in \Gamma (\mathcal E),
\end{equation}
\item the $E$-valued, symmetric bilinear pairing $(-,-)_+ : \omni (E) \otimes \omni (E) \to E$ defined by
\[
(\frkd + \mu, \frkr + \nu)_+ = \frac{1}{2} \left(\conpairing{\mu, \frkr}_E + \conpairing{\nu, \frkr}_E \right).
\]
\end{itemize}
Formula \eqref{eq:Dorf_1} requires some explanations. Here $\mathfrak L$ is the \emph{Lie derivative} of jets along derivations and it is uniquely defined by
\[
\conpairing{\frkL_\frkd \mu, \frkr }_E = \frkd \conpairing{\mu, \frkr}_E  - \conpairing{\mu, [\frkd, \frkr]_\dev}_E.
\]
Together with the structure maps $\rho, \{-,-\}, (-,-)_+$, the omni-Lie algebroid $\omni (E)$ is an $E$-Courant algebroid \cite{Chen-Liu-Sheng1}.

\subsection{Vector bundle forms}\label{Sec:vect_form}

 In this section, we propose an ``$n$-form version'' of the omni-Lie algebroid and study its isotropic and involutive subbundles. We begin with a ``vector bundle version'' of differential forms. In the case $n = 1$ our definition of an ``higher form'' should reproduce sections of the jet bundle $\jet E$.

The gauge algebroid $\dev{E}$ acts tautologically on the vector bundle $E$. Hence there is a natural cochain complex
\[
\left(\Omega^\bullet(\dev E, E), \jetd \right),
\]
the \emph{de Rham complex} of $\dev E$ with coefficients in its representation $E$. Here
\[
\Omega^\bullet(\dev E, E)
\triangleq \Gamma(\Hom(\wedge^{\bullet}{\dev{E}},E)),
\]
and the differential $\jetd: \Omega^k(\dev E, E)\lon \Omega^{k+1}(\dev E, E)$ is given by the usual formula:
\begin{equation}\label{coboundary}
\begin{aligned}
\jetd\mu(\frkd_{0},\frkd_{1},\cdots,\frkd_{k})  & \triangleq \sum_{i=0}^k(-1)^{i}\frkd_{i}\left(\mu(\frkd_{0},\cdots,\widehat{\frkd_{i}},\cdots,\frkd_{k})\right) \\
 & \quad +\sum_{0\leq i<j\leq k}(-1)^{i+j}\mu([\frkd_{i},\frkd_{j}],\cdots,\widehat{\frkd_{i}},\cdots,\widehat{\frkd_{j}},\cdots,\frkd_{k}),
 \end{aligned}
\end{equation}
for all $\mu \in \Omega^k(\dev E, E)$, $\frkd_i \in \Gamma (\frkD E)$, where a hat ``$\widehat{-}$'' denotes omission.

\begin{remark}\label{rem:contraction}
de Rham complex $(\Omega^\bullet(\dev E, E), \jetd )$ is actually acyclic \cite{Rubtsov}. Even more, it possesses a canonical contracting homotopy given by contraction $\iota_{\Id_E}$ with the identity endomorphism $\Id_E$.
\end{remark}

Clearly, $\Omega^\bullet(\dev E, E)$ is a DG-module over $\Omega^\bullet(\dev E) \triangleq \Gamma (\Hom (\wedge^\bullet \dev E, \Real_M))$. Using $\jd : \dev E \to TM$, we can define a DG-algebra map
\[
\jd^\ast : \Omega^\bullet (M) \to \Omega^\bullet(\dev E), \quad \omega \mapsto \omega (\jd -, \ldots, \jd -).
\]
Finally we can \emph{change the scalars} via $\jd^\ast$, and give $\Omega^\bullet(\dev E, E)$ the structure of a DG-module over $\Omega^\bullet (M)$.
Notice that, for all $u\in\Gamma (E)$, $\jetd u = \frkj u \in \Omega^1(\dev E,
E)$  is actually a section of $\jet E$ and, from the $\Omega^\bullet (M)$-DG-module property, we have that
\[
\jetd(fu)=f\jetd u+ d f\otimes u,\quad \forall f\in \CWM,~ u\in
\Gamma(E).
\]
Now we want to extend $\Gamma (\jet E)$ to a whole $\Omega^\bullet (M)$-DG-submodule $\Omega^\bullet_{\jet E} $ of $\Omega^\bullet(\dev E, E)$. The DG-module $\Omega^\bullet_{\jet E}$ will be our vector bundle version of differential forms.

In the graded vector bundle $\Hom(\wedge^\bullet \dev E, E)$ consider the graded subbundle \cite{Chen-Liu-Sheng1}:
\begin{equation}\label{eq:J_E}
\begin{aligned}
\jet_\bullet E \triangleq \{\mu \in
\Hom(\wedge^\bullet \dev E, E) & : \exists \lambda_{\mu}\in
\Hom(\wedge^{\bullet-1}TM, E), \\
& \mbox{ s.t. }
 \iota_\Phi \mu =\Phi\circ \jd^\ast {\lambda_{\mu}},~ \forall\Phi\in \gl(E)\}.
 \end{aligned}
\end{equation}
Sections of $\jet_\bullet E$ will be denoted shortly by $\Omega^\bullet_{\jet E}$, and the degree $n$ homogeneous component will be denoted $\jet_n E$. In particular $\Omega^n_{\jet E} = \Gamma (\jet_n E)$. Notice that $\jet_1 E = \jet E$. The space $\jet_\bullet E$ and its section were first considered in \cite{Chen-Liu-Sheng1} (under a different notation).

\begin{remark}
Let $\mu \in \jet_\bullet E$. Then the $E$-valued form $\lambda_\mu$ is necessarily unique and it is completely determined by the condition
\begin{equation}\label{eq:lambda}
\jd^\ast \lambda_\mu = \lambda_\mu (\jd -, \ldots, \jd -) = \iota_{\Id_E} \mu,\quad \text{for all $\Phi \in \gl (E)$.}
\end{equation}

\end{remark}

\begin{proposition}\label{prop:VB-forms}
$\Omega^\bullet_{\jet E}$ is an $\Omega^\bullet (M)$-submodule in $\Omega^\bullet (\dev E, E)$. Additionally, it is preserved, for all $\frkd \in \Gamma (\dev E)$, by\\
(1) the de Rham differential $\jetd$,
(2) contractions $\iota_\frkd$,
(3) Lie derivatives $\LieDerivation_\frkd$,
\end{proposition}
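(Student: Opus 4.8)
The plan is to verify each of the three closure properties by working directly with the defining condition \eqref{eq:J_E}, namely that $\mu \in \jet_\bullet E$ if and only if $\iota_\Phi \mu = \Phi \circ \jd^\ast \lambda_\mu$ for all $\Phi \in \gl(E)$, with $\lambda_\mu$ determined by \eqref{eq:lambda}. The submodule claim is the easiest: for $\omega \in \Omega^\bullet(M)$ and $\mu \in \Omega^\bullet_{\jet E}$, one contracts $\jd^\ast\omega \wedge \mu$ with an arbitrary $\Phi \in \gl(E)$. Since $\Phi$ lies in the kernel of $\jd$, we have $\iota_\Phi (\jd^\ast \omega) = 0$, so the graded Leibniz rule for contraction gives $\iota_\Phi(\jd^\ast\omega \wedge \mu) = (-1)^{|\omega|}\,\jd^\ast\omega \wedge \iota_\Phi\mu = (-1)^{|\omega|}\,\jd^\ast\omega \wedge (\Phi\circ \jd^\ast\lambda_\mu) = \Phi\circ \jd^\ast\big((-1)^{|\omega|}\omega\wedge\lambda_\mu\big)$, which exhibits $\jd^\ast\omega\wedge\mu$ as a vector bundle form with $\lambda_{\jd^\ast\omega\wedge\mu} = (-1)^{|\omega|}\omega\wedge\lambda_\mu$. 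Thus $\Omega^\bullet_{\jet E}$ is an $\Omega^\bullet(M)$-submodule.

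Next I would handle contraction (2). For $\frkd \in \Gamma(\dev E)$ and $\mu \in \Omega^\bullet_{\jet E}$, I must produce an $E$-valued form $\lambda_{\iota_\frkd\mu}$ witnessing that $\iota_\frkd\mu \in \Omega^\bullet_{\jet E}$. The key computation is $\iota_\Phi\iota_\frkd\mu$ for $\Phi \in \gl(E)$; using the graded-commutation identity $\iota_\Phi\iota_\frkd = -\iota_\frkd\iota_\Phi$ together with $\iota_\Phi\mu = \Phi\circ\jd^\ast\lambda_\mu$, I expect to read off the witness in terms of $\iota_{\jd(\frkd)}\lambda_\mu$; here the compatibility $\jd\circ\Phi$-type identities and the fact that $\jd(\Phi)=0$ will be used to commute $\Phi$ past the contraction. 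This is where one must be careful, since $\Phi$ acts on values in $E$ while $\frkd$ acts as a derivation, and the two interact through the pairing; I would track indices using \eqref{eq:lambda} to confirm that the resulting form is indeed pulled back from $M$ and composed with $\Phi$ in the required way.

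The remaining two properties then follow almost for free. For the differential (1), the cleanest route is to use the contracting homotopy of Remark \ref{rem:contraction}: since $\iota_{\Id_E}$ is a contracting homotopy for $\jetd$ on all of $\Omega^\bullet(\dev E, E)$, one has the Cartan-type relation $\LieDerivation_{\Id_E} = \iota_{\Id_E}\jetd + \jetd\,\iota_{\Id_E}$, and more usefully the defining property \eqref{eq:lambda} says $\iota_{\Id_E}\mu = \jd^\ast\lambda_\mu$; differentiating this relation, commuting $\jetd$ with $\iota_{\Id_E}$ via the Cartan formula, and using that $\jetd$ and the de Rham differential $d$ on $M$ are intertwined by $\jd^\ast$ (because $\jd^\ast$ is a DG-algebra map), should give $\iota_{\Id_E}\jetd\mu = \jd^\ast(d\lambda_\mu \pm \text{correction})$, identifying $\lambda_{\jetd\mu}$ and hence placing $\jetd\mu$ in $\Omega^\bullet_{\jet E}$. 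Finally, for the Lie derivative (3), I would simply invoke the Cartan magic formula $\LieDerivation_\frkd = \iota_\frkd\jetd + \jetd\,\iota_\frkd$, which expresses $\LieDerivation_\frkd$ in terms of $\jetd$ and $\iota_\frkd$; since $\Omega^\bullet_{\jet E}$ is closed under both of these by (1) and (2), it is automatically closed under $\LieDerivation_\frkd$. The main obstacle I anticipate is the bookkeeping in step (2): getting the signs and the precise form of $\lambda_{\iota_\frkd\mu}$ right, since everything downstream depends on it, and because the interaction between the endomorphism action of $\Phi$ on $E$-values and the contraction by $\frkd$ must be disentangled carefully using \eqref{eq:lambda}.
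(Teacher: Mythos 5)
Your treatment of the $\Omega^\bullet(M)$-module property and of closure under contractions $\iota_\frkd$ coincides with the paper's proof: in both cases one contracts with an arbitrary $\Phi\in\gl(E)$, uses $\iota_\Phi\jd^\ast\omega=0$ (resp.\ $\iota_\Phi\iota_\frkd=-\iota_\frkd\iota_\Phi$) together with $\iota_\Phi\mu=\Phi\circ\jd^\ast\lambda_\mu$, and reads off the witnesses $\lambda_{\omega\wedge\mu}=(-1)^{|\omega|}\omega\wedge\lambda_\mu$ and $\lambda_{\iota_\frkd\mu}=-\iota_{\jd(\frkd)}\lambda_\mu$. Your derivation of (3) from (1) and (2) via the Cartan formula $\LieDerivation_\frkd=[\iota_\frkd,\jetd]$ is also correct and is, if anything, more self-contained than the paper, which simply cites \cite{Chen-Liu-Sheng1} for closure under $\jetd$ and $\LieDerivation_\frkd$.

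The gap is in your proposed proof of (1). Membership in $\jet_\bullet E$ requires $\iota_\Phi(\jetd\mu)=\Phi\circ\jd^\ast\lambda$ for \emph{every} $\Phi\in\gl(E)$; the relation \eqref{eq:lambda} involving $\iota_{\Id_E}$ only identifies what $\lambda_\mu$ must be \emph{once membership is already known} --- it is not a criterion for membership. Your plan (differentiate $\iota_{\Id_E}\mu=\jd^\ast\lambda_\mu$ and commute $\jetd$ past $\iota_{\Id_E}$) therefore at best verifies the defining condition for the single endomorphism $\Phi=\Id_E$. When $\operatorname{rank} E>1$ this is strictly weaker: a form can satisfy $\iota_{\Id_E}\mu=\jd^\ast\lambda$ while $\iota_\Phi\mu$ fails to factor through $\Phi$ for trace-free $\Phi$. (For $\operatorname{rank} E=1$ the condition is vacuous, cf.\ Example \ref{example:linebundle}, so your argument would only cover that case.) To close the gap you should compute $\iota_\Phi\jetd\mu$ for arbitrary $\Phi\in\gl(E)$ directly from \eqref{coboundary}, using that $\jd(\Phi)=0$ and that $[\Phi,\frkd]\in\gl(E)$ for any derivation $\frkd$, so the terms $\mu([\Phi,\frkd_j],\ldots)$ can again be rewritten via the defining condition on $\mu$ --- or else appeal to \cite{Chen-Liu-Sheng1} as the paper does. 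Once (1) is established for all $\Phi$, your Cartan-formula argument for (3) goes through unchanged.
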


\begin{proof}
First we show that $\Omega^\bullet_{\jet E}$ is an $\Omega^\bullet (M)$-submodule. Let $\omega \in \Omega^k (M)$, and $\mu \in \Omega^l (\dev E, E)$. Then the product $\omega \wedge \mu$ is given by
\begin{equation}\label{eq:dot}
\omega \wedge \mu (\frkd_1, \ldots, \frkd_{k+l}) = \sum_{\sigma \in S_{k, l}} (-1)^{\sigma} \omega(\jd (\frkd_{\sigma(1)}), \ldots, \jd(\frkd_{\sigma(k)})) \mu (\frkd_{\sigma_{k+1}}, \ldots, \frkd_{\sigma{k+l}}),
\end{equation}
for all $\frkd_i \in \Gamma (\dev E)$, where $S_{k,l}$ denoted $(k,l)$-unshuffles.
Now let $\mu \in \Omega^l_{\jet E}$, and let $\Phi \in \Gamma (\gl(E))$. As $\jd (\Phi) = 0$, from  \eqref{eq:dot}, we immediately get
\[
\iota_\Phi \omega \wedge \mu = (-1)^k \omega \wedge \iota_\Phi \mu = (-1)^k \omega \wedge \left( \Phi \circ \jd^\ast \lambda_\mu \right) = \Phi \circ \jd^\ast ((-1)^k  \omega \wedge \lambda_\mu).
\]
This shows that $\omega \wedge \mu \in \Omega^{k+l}_{\jet E}$ and $\lambda_{\omega \wedge \mu} = (-1)^k \omega \wedge \lambda_\mu$.

That $\Omega^\bullet_{\jet E}$ is preserved by the de Rham differential $\jetd$, and Lie derivatives $\LieDerivation_\frkd$, is proved in \cite{Chen-Liu-Sheng1}. To see that it is also preserved by contractions $\iota_\frkd$, let $\mu, \Phi$ be as above, and compute
\[
\iota_\Phi \iota_\frkd \mu = - \iota_\frkd \iota_\Phi \mu = -\iota_\frkd (\Phi \circ \jd^\ast \lambda_\mu) = - \Phi \circ \jd^\ast \iota_{\jd(\frkd)} \lambda_\mu.
\]
This shows that $\iota_\frkd \mu \in \Omega^{l-1}_{\jet E}$, and $\lambda_{\iota_\frkd \mu}= - \iota_{\jd(\frkd)} \lambda_\mu$.
\end{proof}

\begin{remark}\label{rem:Cartan}
From Proposition \ref{prop:VB-forms}, the full \emph{Cartan calculus} restricts to $\Omega^\bullet_{\jet E}$. Additionally, from Remark \ref{rem:contraction} and Proposition \ref{prop:VB-forms} (last point), $(\Omega^\bullet_{\jet E}, \jetd)$ is an acyclic subcomplex and $\iota_{\Id_E}$ is a contracting homotopy for it.
\end{remark}

\begin{proposition}
The short exact sequence \eqref{eq:SES_jet} extends to a (degree-wise) short exact sequence of $\wedge^\bullet T^\ast M$-modules
\begin{equation}\label{eq:SES_higher}
\xymatrix{0 \ar[r] & \wedge^\bullet T^\ast M \otimes E  \ar[r]^-{\e_\bullet}
&
                {\jet}_\bullet{E} \ar[r]^-{\p_\bullet} & \wedge^{\bullet} T^\ast M \otimes E\ar[r]  & 0
                },
\end{equation}
where
\[
\p_\bullet (\mu) = \lambda_\mu, \quad \text{and} \quad \e_\bullet (\lambda') = \jd^\ast \lambda'.
\]
\end{proposition}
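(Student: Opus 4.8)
The plan is to prove the statement degree by degree, establishing the three characterizing properties of a short exact sequence in turn. Fixing $n$, the degree-$n$ piece reads
\[
0 \longrightarrow \wedge^n T^\ast M \otimes E \stackrel{\e_n}{\longrightarrow} \jet_n E \stackrel{\p_n}{\longrightarrow} \wedge^{n-1} T^\ast M \otimes E \longrightarrow 0 ,
\]
so $\e_\bullet$ preserves degree while $\p_\bullet$ lowers it by one. The first thing I would record is that both maps are well defined. Since $\jd : \dev E \to TM$ is a surjective bundle map, the pullback $\jd^\ast$ is injective on $E$-valued forms (a tuple of tangent vectors always lifts to derivations, so $\jd^\ast \lambda' = 0$ forces $\lambda' = 0$). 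Hence $\lambda_\mu$ is the \emph{unique} solution of $\jd^\ast \lambda_\mu = \iota_{\Id_E}\mu$ from \eqref{eq:lambda}, which makes $\p_\bullet : \mu \mapsto \lambda_\mu$ a well-defined $\Real$-linear map, and the same injectivity gives that $\e_\bullet$ is injective. That $\p_\bullet \circ \e_\bullet = 0$ is immediate: for $\lambda' \in \wedge^n T^\ast M \otimes E$ and $\Phi \in \gl(E)$ one has $\iota_\Phi \jd^\ast \lambda' = 0$ because $\jd\Phi = 0$, so $\lambda_{\jd^\ast\lambda'} = 0$. Finally, $\e_\bullet$ is a module map because $\jd^\ast$ is the scalar-change DG-algebra map defining the module structure, while Proposition \ref{prop:VB-forms} gives $\p_\bullet(\omega \wedge \mu) = \lambda_{\omega\wedge\mu} = (-1)^{|\omega|}\omega\wedge\lambda_\mu$, exhibiting $\p_\bullet$ as a module morphism of degree $-1$ (the Koszul sign is a point to state carefully).

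For exactness in the middle I would argue that $\Ker \p_\bullet = \Img \e_\bullet$ via a ``basic form'' argument. If $\mu \in \Ker\p_\bullet$, i.e. $\lambda_\mu = 0$, then the defining condition of $\jet_\bullet E$ forces $\iota_\Phi \mu = \Phi \circ \jd^\ast\lambda_\mu = 0$ for every $\Phi \in \gl(E)$. By multilinearity this means that $\mu(\frkd_1,\ldots,\frkd_n)$ is unchanged when any argument is modified by an element of $\gl(E) = \Ker\jd$, so its value depends only on $\jd\frkd_1,\ldots,\jd\frkd_n$. Using surjectivity of $\jd$ to lift tangent vectors, this produces a unique $\nu \in \wedge^n T^\ast M \otimes E$ with $\mu = \jd^\ast \nu = \e_\bullet(\nu)$, giving $\Ker\p_\bullet \subseteq \Img\e_\bullet$; the reverse inclusion is the vanishing of the composite already checked.

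The only step demanding a genuine construction is surjectivity of $\p_\bullet$, which I expect to be the main obstacle. Here I would fix a connection on $E$, that is, a splitting of the Atiyah sequence \eqref{eq:SES_dev}, encoded as a projection $P : \dev E \to \gl(E)$ with $P|_{\gl(E)} = \Id$ and $P\frkd = 0$ on horizontal derivations. Given $\lambda' \in \wedge^{n-1} T^\ast M \otimes E$, I would set
\[
s(\lambda')(\frkd_1,\ldots,\frkd_n) = \sum_{i=1}^n (-1)^{i-1} P(\frkd_i)\big( \lambda'(\jd\frkd_1,\ldots,\widehat{\jd\frkd_i},\ldots,\jd\frkd_n) \big),
\]
and then verify by a direct contraction computation, using $P(\Phi) = \Phi$ and $\jd\Phi = 0$ for $\Phi \in \gl(E)$, that $\iota_\Phi s(\lambda') = \Phi \circ \jd^\ast\lambda'$. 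This shows $s(\lambda') \in \jet_n E$ with $\lambda_{s(\lambda')} = \lambda'$, so $\p_\bullet \circ s = \Id$ and $\p_\bullet$ is surjective (a right inverse $s$ also splits the sequence). The delicate points are guessing the correct ``interior product against the connection'' formula for $s$, checking that it genuinely lands in the subbundle $\jet_n E$ rather than merely in $\Hom(\wedge^n\dev E, E)$, and noting that smoothness and $\Real$-linearity of $s$ are automatic once a global connection is chosen, which always exists. A final rank count, $\mathrm{rk}\,\jet_n E = \mathrm{rk}(\wedge^n T^\ast M \otimes E) + \mathrm{rk}(\wedge^{n-1} T^\ast M \otimes E)$, serves as a consistency check on the whole sequence.
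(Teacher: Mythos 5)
Your proof is correct and, notably, self-contained: the paper disposes of this proposition with a bare citation to \cite{Chen-Liu-Sheng1}, so you are supplying an argument the paper omits entirely. Your four steps are all sound: injectivity of $\jd^\ast$ (hence of $\e_\bullet$ and uniqueness of $\lambda_\mu$) from surjectivity of $\jd$; the vanishing of $\p_\bullet\circ\e_\bullet$ from $\jd(\Phi)=0$; the basic-form argument for exactness in the middle, where $\iota_\Phi\mu=0$ for all $\Phi\in\gl(E)=\Ker\jd$ lets $\mu$ descend along $\jd$; and the connection-built right inverse $s$ for surjectivity. It is worth pointing out that your splitting is not an ad hoc guess: writing $\frkd_i=\nabla_{X_i}+P(\frkd_i)$ and expanding $\jetd\jd^\ast\lambda'$ via \eqref{coboundary}, one finds $s(\lambda')=\jetd\jd^\ast\lambda'-\jd^\ast d^\nabla\lambda'$, i.e.\ your $s$ is exactly the $\Omega^\bullet(M)$-linear splitting $\frkj_\bullet^\nabla$ that the paper introduces only later, in Section 4; alternatively, the connection-free map $\frkj_\bullet=\jetd\circ\jd^\ast$ of Remark \ref{rem:split} already gives surjectivity once one knows that $\jetd$ preserves $\Omega^\bullet_{\jet E}$ (Proposition \ref{prop:VB-forms}) and that $\lambda_{\jetd\jd^\ast\lambda'}=\lambda'$, so the connection can be avoided at the price of invoking that closure property. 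Two cosmetic remarks: the printed target of $\p_\bullet$ in \eqref{eq:SES_higher} carries no explicit degree shift, and you correctly read it as $\wedge^{\bullet-1}T^\ast M\otimes E$, consistently with $\lambda_\mu\in\Hom(\wedge^{\bullet-1}TM,E)$ and with \eqref{eq:SES_Omega}; and in the middle-exactness step you should (as you implicitly do via local lifts) note that the descended form $\nu$ is smooth, which follows by choosing a local splitting of \eqref{eq:SES_dev}.
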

\begin{proof}
See \cite{Chen-Liu-Sheng1}.
\end{proof}

\begin{remark}\label{rem:split}
The sequence \eqref{eq:SES_higher} does not split (in the category of $\wedge^\bullet T^\ast M$-modules) in general. However the induced sequence in sections:
\begin{equation}\label{eq:SES_Omega}
\xymatrix{0 \ar[r] & \Omega^\bullet (M, E)  \ar[r]^-{\e_\bullet}
&
              \Omega^\bullet_{\jet E} \ar[r]^-{\p_\bullet} & \Omega^{\bullet-1} (M, E)\ar[r]  & 0
                },
\end{equation}
splits canonically in the category of graded vector spaces. Specifically, the map
\[
\frkj_\bullet : \Omega^{\bullet-1} (M, E) \to \Omega^\bullet_{\jet E}, \quad \lambda \mapsto \jetd \jd^\ast \lambda
\]
is a right splitting. As a consequence, there is a canonical isomorphism of graded vector spaces
\begin{equation}\label{eq:iso}
\Omega^\bullet_{\jet E} \cong \Omega^\bullet (M, E) \oplus \Omega^{\bullet-1}(M, E)
\end{equation}
identifying $\mu \in \Omega^k_{\jet E}$ with a pair $(\mu_0, \mu_1)$ consisting of an $E$-valued $k$-form, and a $E$-valued $(k-1)$-form. It is easy to see that, actually,
\begin{equation}\label{eq:0_1}
(\mu_0, \mu_1) = (\lambda_{\jetd \mu}, \lambda_\mu).
\end{equation}
It then follows from Remark \ref{rem:Cartan} that:
\begin{equation}\label{eq:dj+j}
\mu =  \jd^\ast \mu_0 + \jetd \jd^\ast \mu_1.
\end{equation}
Notice that $\frkj_1 = \frkj$ is just the first jet prolongation. Finally, we describe all natural operations on $\Omega^\bullet_{\jet E}$ in terms of the isomorphism \eqref{eq:iso}. So, let $\omega \in \Omega^\bullet (M)$, and let $\frkd \in \Gamma (\dev E)$. The multiplication by $\omega$, the de Rham differential, the contraction and the Lie derivative with $\frkd$ induce operations on $\Omega^\bullet (M, E) \oplus \Omega^{\bullet-1}(M, E)$ which we still denote by $\omega \wedge -$, $\jetd$, $\iota_\frkd$, $\LieDerivation_\frkd$. A direct computation exploiting either \eqref{eq:0_1} or \eqref{eq:dj+j} then shows that, for all $\mu \in \Omega^\bullet_{\jet E}$,
\begin{align}
\omega \wedge (\mu_0, \mu_1) & = \left(\omega \wedge \mu_0 - (-1)^\omega d\omega \wedge \mu_1 , (-1)^\omega \omega \wedge \mu_1 \right), \nonumber \\
\jetd (\mu_0, \mu_1) & = \left( 0, \mu_0 \right), \label{eq:d}\\
\iota_\frkd (\mu_0, \mu_1) & = \left( \iota_{\jd (\frkd)} \mu_0+ \LieDerivation_\frkd \mu_1, - \iota_{\jd (\frkd)} \mu_1\right) \label{eq:iota}\\
\LieDerivation_\frkd (\mu_0, \mu_1) & = \left( \LieDerivation_\frkd \mu_0, \LieDerivation_\frkd \mu_1 \right).  \label{eq:Lie}
\end{align}
In particular
\[
\iota_{\Id_E} (\mu_0, \mu_1) = (\mu_1, 0).
\]
In Formulas \eqref{eq:iota} and \eqref{eq:Lie} it appears the Lie derivative along $\frkd \in \Gamma (\dev E)$ of an $E$-valued form $\nu \in \Omega^\bullet (M, E)$. This requires an explanation. Let $\nu$ be of degree $k$, then $\LieDerivation_\frkd \nu$ is the $E$-valued $k$-form on $M$ given by
\[
\LieDerivation_\frkd \nu (X_1, \ldots, X_k) = \frkd (\nu (X_1, \ldots, X_k)) - \sum_{i=1}^k \nu (X_1, \ldots, [\jd(\frkd), X_i], \ldots, X_k),
\]
for all $X_i \in \mathfrak X (M)$.
\end{remark}

\begin{remark}
Notice that $\Omega^\bullet (M)$ can be seen as the graded algebra of function on the graded manifold $T[1] M$. It is not hard to see, e.g. in local coordinates, that $\Omega^\bullet_{\frkJ E}$ is also the $\Omega^\bullet (M)$-module of sections of the graded bundle (\ref{graded}).
\end{remark}

\subsection{The higher omni-Lie algebroid}\label{Sec:higher_omni}

We are now ready to give a definition of \emph{higher omni-Lie algebroid}. Let $E \to M$ be a vector bundle, and let $n$ be a positive integer. We begin noticing that there exists a $\jet_{n-1} E$-valued, bilinear pairing
\[
\langle -, -\rangle_{\jet E} : \jet_n E \otimes \dev E \to \jet_{n-1} E, \quad (\mu, \frkd) \mapsto \iota_\frkd \mu.
\]

\begin{Definition}\label{higheromni}
The \emph{$n$-omni-Lie algebroid} or, simply, the \emph{higher omni-Lie algebroid} of $E$, is the quadruple $(\omni_n (E),\rho,  \Dorfman{-,-}, (-,-)_+)$, where $\omni_n(E)=\dev{E}\oplus \jet_n E$. Additionally
\begin{itemize}
\item $\rho : \omni_n (E) \to \dev E$ is the projection onto the first summand,
\item  $\Dorfman{-,-}:\Gamma(\omni_n (E))\times\Gamma(\omni_n (E))\longrightarrow\Gamma(\omni_n (E))$
is the bracket defined by:
\begin{equation}\label{Dorfman}
\Dorfman{\frkd+\mu,\frkr+\nu}\defbe[\frkd,\frkr]_{\dev}+\LieDerivation_{\frkd}\nu-\LieDerivation_{\frkr}\mu
+\jetd\conpairing{\mu,\frkr}_{\jet E} = [\frkd,\frkr]_{\dev}+\LieDerivation_{\frkd}\nu- \iota_{\frkr} \jetd \mu,
\end{equation}
for all $\frkd,\frkr\in \Gamma (\dev{E})$, and $\mu,\nu\in \Gamma(\jet_n E)$, and
\item $({-,-})_+ : \omni_n (E) \otimes \omni_n(E) \to \jet_{n-1} E$ is the symmetric bilinear pairing defined by:
\begin{equation}\label{pair}
({\frkd+\mu,\frkr+\nu})_+ \defbe
\half \left(\conpairing{\frkd,\nu}_{\jet E}  +\conpairing{\frkr,\mu}_{\jet E} \right),
\end{equation}
for all $\frkd,\frkr\in\dev{E}$, and $\mu,\nu\in\jet_n E$.
\end{itemize}
The bracket $\Dorfman{-,-}$ is called the \emph{higher Dorfman bracket}.
\end{Definition}

Notice that the $1$-omni-Lie algebroid $\omni_1 (E) = \omni (E)$ is just the omni-Lie algebroid. In the following, we will often denote $\omni_n(E)$ simply by $\omni$ if this does not lead to confusion.

%
%

\begin{example}\label{example:linebundle}
Let $E = \ell$ be a line bundle. Then every first order differential operator $\Gamma (\ell) \to \Gamma (\ell)$ is a derivation. Hence $\dev \ell \cong \Hom (\jet \ell, \ell)$ and $\jet \ell \cong \Hom (\dev \ell, \ell)$. Similarly, the condition on $\lambda$ in \eqref{eq:J_E} is empty and we have
\[
\mathfrak{J}_\bullet \ell = \Hom(\wedge^\bullet \mathfrak{D}\ell,\ell).
\]
It follows that $\Omega^\bullet_{\jet \ell} = \Omega^\bullet (\mathfrak{D}\ell,\ell)$ and
\[
\omni_n (\ell) = \dev \ell \oplus \Hom (\wedge^n \dev \ell, \ell).
\]
\end{example}



\begin{theorem}\label{Thm:Property of omni}
 Let $(\omni, \rho, \Dorfman{-,-}, (-,-)_+)$ be the $n$-omni-Lie algebroid of a vector bundle $E \to M$. Then
\begin{itemize}
\item[\rm(i)] $(\Gamma(\omni),\Dorfman{-,-})$ is a Leibniz algebra;
\item[\rm(ii)] $\rho(\Dorfman{e_1,e_2})=[\rho(e_1),\rho(e_2)]_{\dev}$,
\item[\rm(iii)]
$\Dorfman{e_1,fe_2}=f\Dorfman{e_1,e_2}+\conpairing{\jd\circ\rho(e_1),d f}(e_2)$,
\item[\rm(iv)]$\LieDerivation_{\rho(e_1)}({e_2,e_3})_+=(\Dorfman{e_1,e_2},e_3)_++(e_2,\Dorfman{e_1,e_3})_+$,
\item[\rm(v)] $\Dorfman{e,e}= \jetd ({e,e})_+$,
\end{itemize}
for all $e, e_1, e_2, e_3\in \Gamma(\omni)$ and $f\in\CWM$.
\end{theorem}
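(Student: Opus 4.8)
The plan is to work componentwise, splitting each identity into its $\dev E$-part and its $\jet_n E$-part via the decomposition $\omni_n(E) = \dev E \oplus \jet_n E$, and to reduce everything to the Cartan calculus on $\Omega^\bullet_{\jet E}$ provided by Proposition \ref{prop:VB-forms} and Remark \ref{rem:Cartan}. Throughout I would write $e_i = \frkd_i + \mu_i$ with $\frkd_i \in \Gamma(\dev E)$ and $\mu_i \in \Omega^n_{\jet E}$, and freely use the second, equivalent form of the bracket in \eqref{Dorfman}, whose $\jet_n E$-component is $\LieDerivation_{\frkd}\nu - \iota_\frkr \jetd\mu$. The identities I will lean on are the standard graded-commutator relations valid on the acyclic complex $(\Omega^\bullet_{\jet E},\jetd)$: Cartan's magic formula $\LieDerivation_\frkd = \iota_\frkd\jetd + \jetd\iota_\frkd$, the bracket relations $[\LieDerivation_\frkd,\LieDerivation_\frkr] = \LieDerivation_{[\frkd,\frkr]_\dev}$ and $[\LieDerivation_\frkd,\iota_\frkr] = \iota_{[\frkd,\frkr]_\dev}$, the commutation $[\LieDerivation_\frkd,\jetd] = 0$, the nilpotency $\jetd^2 = 0$, and the anticommutativity $\iota_\frkd\iota_\frkr = -\iota_\frkr\iota_\frkd$. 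These hold on all of $\Omega^\bullet(\dev E, E)$ because $\dev E$ is a Lie algebroid acting on $E$, and they descend to $\Omega^\bullet_{\jet E}$ by Proposition \ref{prop:VB-forms}.

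Items (ii), (iii) and (v) I would dispatch directly from the definitions. For (ii), the $\dev E$-component of \eqref{Dorfman} is exactly $[\frkd_1,\frkd_2]_\dev$, so applying $\rho$ gives the claim at once. For (iii), expanding $\Dorfman{e_1, f e_2}$ and using that $\LieDerivation_{\frkd_1}$ and $[\frkd_1,-]_\dev$ are derivations in the direction $\jd(\frkd_1)$, while $\iota_{f\frkr}\jetd\mu = f\,\iota_\frkr\jetd\mu$ is $\CWM$-linear, collects the single anomalous term $(\jd(\frkd_1)f)\,e_2 = \conpairing{\jd\circ\rho(e_1),df}(e_2)$. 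For (v), the first form of the bracket gives $\Dorfman{e,e} = \LieDerivation_\frkd\mu - \LieDerivation_\frkd\mu + \jetd\conpairing{\mu,\frkd}_{\jet E} = \jetd\iota_\frkd\mu$, while $(e,e)_+ = \iota_\frkd\mu$, so the two agree. Item (iv) is slightly longer but still short: I would expand both sides using $(e_2,e_3)_+ = \half(\iota_{\frkd_2}\mu_3 + \iota_{\frkd_3}\mu_2)$, apply $[\LieDerivation_{\frkd_1},\iota_{\frkr}] = \iota_{[\frkd_1,\frkr]_\dev}$ to move $\LieDerivation_{\frkd_1}$ through the contractions, and observe that the two double contractions $-\iota_{\frkd_3}\iota_{\frkd_2}\jetd\mu_1$ and $-\iota_{\frkd_2}\iota_{\frkd_3}\jetd\mu_1$ arising from the $\jetd$-tails of the brackets cancel by anticommutativity; the surviving terms then match $\LieDerivation_{\frkd_1}(e_2,e_3)_+$ identically.

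The real work is item (i), the Leibniz (Loday) identity $\Dorfman{e_1,\Dorfman{e_2,e_3}} = \Dorfman{\Dorfman{e_1,e_2},e_3} + \Dorfman{e_2,\Dorfman{e_1,e_3}}$. Its $\dev E$-component reduces, via (ii), to the Jacobi identity for $[-,-]_\dev$, which holds because $\dev E$ is a Lie algebroid. For the $\jet_n E$-component I would compute each of the three nested brackets explicitly with $\LieDerivation_\frkd\nu - \iota_\frkr\jetd\mu$, then sort the resulting terms according to which of $\mu_1, \mu_2, \mu_3$ they contain. The $\mu_3$-terms reduce to $[\LieDerivation_{\frkd_1},\LieDerivation_{\frkd_2}] = \LieDerivation_{[\frkd_1,\frkd_2]_\dev}$; the $\mu_2$-terms reduce to $[\LieDerivation_{\frkd_1},\iota_{\frkd_3}] = \iota_{[\frkd_1,\frkd_3]_\dev}$ after rewriting $\jetd\LieDerivation_{\frkd_1} = \LieDerivation_{\frkd_1}\jetd$; and the $\mu_1$-terms reduce to $[\LieDerivation_{\frkd_2},\iota_{\frkd_3}] = \iota_{[\frkd_2,\frkd_3]_\dev}$ after collapsing $\iota_{\frkd_3}\jetd\iota_{\frkd_2}\jetd\mu_1$ into $\iota_{\frkd_3}\LieDerivation_{\frkd_2}\jetd\mu_1$ by Cartan's formula and $\jetd^2=0$. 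Each of the three groups then matches separately.

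The main obstacle is purely bookkeeping: the $\jet_n E$-component of (i) produces roughly a dozen terms, several of them nested double contractions and double differentials, and the necessary cancellations surface only after one systematically applies $\jetd^2 = 0$ and $[\LieDerivation_\frkd,\jetd]=0$ to collapse the $\iota\jetd\iota\jetd$ expressions. The single conceptual point to secure is that all of the Cartan relations are genuinely available here — this is precisely the content of Proposition \ref{prop:VB-forms}, which guarantees that $\iota_\frkd$, $\LieDerivation_\frkd$ and $\jetd$ preserve $\Omega^\bullet_{\jet E}$ — so that no intermediate term escapes the submodule on which we compute.
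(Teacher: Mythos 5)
Your proposal is correct and follows essentially the same route as the paper: expand each identity componentwise in $\dev E \oplus \jet_n E$ and reduce everything to the Cartan formulas $[\iota_\frkd,\jetd]=\LieDerivation_\frkd$, $[\LieDerivation_\frkd,\jetd]=0$, $[\iota_{\frkd_1},\LieDerivation_{\frkd_2}]=\iota_{[\frkd_1,\frkd_2]}$ on $\Omega^\bullet_{\jet E}$, which are available by Proposition \ref{prop:VB-forms}. The only difference is presentational: you make explicit the cancellation of the double contractions $\iota_{\frkd_3}\iota_{\frkd_2}\jetd\mu_1+\iota_{\frkd_2}\iota_{\frkd_3}\jetd\mu_1$ in (iv), which the paper absorbs silently.
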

\begin{proof} Similarly as in the case of the generalized tangent bundle and the omni-Lie algebroid, the statement follows from standard \emph{Cartan calculus} on $\Omega^\bullet_{\jet E}$. We report here some details of the proof for completeness.

(i). Write $e_i=\frkd_i+\mu_i$,  where $\frkd_i\in\Gamma(\dev{E})$ and $\mu_i\in\Gamma(\jet_n E)$, $i=1,2,3$. Then we have
\begin{equation*}
\begin{aligned}
 \Dorfman{\frkd_1+\mu_1,\Dorfman{\frkd_2+\mu_2,\frkd_3+\mu_3}}
=&\,[\frkd_1,[\frkd_2,\frkd_3]_{\dev}]_{\dev}+\LieDerivation_{\frkd_1}\LieDerivation_{\frkd_2}\nu_3
  -\LieDerivation_{\frkd_1}\LieDerivation_{\frkd_3}\nu_2\\
  &+\LieDerivation_{\frkd_1}\jetd i_{\frkd_3}\nu_2
 -\LieDerivation_{[\frkd_2,\frkd_3]_{\dev}}\nu_1+\jetd i_{[\frkd_2,\frkd_3]_{\dev}}\nu_1,
 \end{aligned}
 \end{equation*}
 \begin{equation*}
 \begin{aligned}
 \Dorfman{\Dorfman{\frkd_1+\mu_1,\frkd_2+\mu_2},\frkd_3+\mu_3}
=&\,[[\frkd_1,\frkd_2]_{\dev},\frkd_3]_{\dev}
 +\LieDerivation_{[\frkd_1,\frkd_2]_{\dev}}\nu_3-\LieDerivation_{\frkd_3}\LieDerivation_{\frkd_1}\nu_2\\
& +\LieDerivation_{\frkd_3}\LieDerivation_{\frkd_2}\nu_1
-\LieDerivation_{\frkd_3}\jetd i_{\frkd_2}\nu_1+\jetd i_{\frkd_3}\LieDerivation_{\frkd_1}\nu_2 \\
&-\jetd i_{\frkd_3}\LieDerivation_{\frkd_2}\nu_1+\jetd i_{\frkd_3}\jetd i_{\frkd_2}\nu_1,
\end{aligned}
 \end{equation*}
 \begin{equation*}
 \begin{aligned}
 \Dorfman{\frkd_2+\mu_2,\Dorfman{\frkd_1+\mu_1,\frkd_3+\mu_3}} =&\,[\frkd_2,[\frkd_1,\frkd_3]_{\dev}]_{\dev}+\LieDerivation_{\frkd_2}\LieDerivation_{\frkd_1}\nu_3
  -\LieDerivation_{\frkd_2}\LieDerivation_{\frkd_3}\nu_1\\
&+\LieDerivation_{\frkd_2}\jetd i_{\frkd_3}\nu_1   -\LieDerivation_{[\frkd_1,\frkd_3]_{\dev}}\nu_2+\jetd i_{[\frkd_1,\frkd_3]_{\dev}}\nu_2.
\end{aligned}
\end{equation*}
So, it is enough to show that
\begin{equation*}
\begin{aligned}
 \LieDerivation_{\frkd_1}\jetd i_{\frkd_3}\nu_2+\jetd i_{[\frkd_2,\frkd_3]_{\dev}}\nu_1
 =&\, \LieDerivation_{\frkd_2}\jetd i_{\frkd_3}\nu_1+\jetd i_{[\frkd_1,\frkd_3]_{\dev}}\nu_2  -\LieDerivation_{\frkd_3}\jetd i_{\frkd_2}\nu_1\\
&+\jetd i_{\frkd_3}\LieDerivation_{\frkd_1}\nu_2  -\jetd i_{\frkd_3}\LieDerivation_{\frkd_2}\nu_1+\jetd i_{\frkd_3}\jetd i_{\frkd_2}\nu_1.
 \end{aligned}
 \end{equation*}
But this identity follows straightforwardly from the following \emph{Cartan formulas} (see, e.g., \cite{Chen-Liu})
 \begin{equation}\label{Cartan}
[\iota_\frkd, \jetd] = \LieDerivation_\frkd, \quad [\LieDerivation_\frkd, \jetd] = 0, \quad [\iota_{\frkd_1}, \LieDerivation_{\frkd_2}] = \iota_{[\frkd_1, \frkd_2]},
 \end{equation}
 where $[-,-]$ is the graded commutator.

 (ii). It follows from $\rho(\Dorfman{\frkd_1+\mu_1,\frkd_2+\mu_2})=[\frkd_1,\frkd_2]_{\dev}$.

(iii). We have
\begin{equation*}
\begin{aligned}
  \Dorfman{\frkd_1+\mu_1,f(\frkd_2+\mu_2)}&=[\frkd_1,f\frkd_2]_{\dev}+\LieDerivation_{\frkd_1}(f\mu_2)-\iota_{f\frkd_2}\jetd\mu_1\\
  &=f\Dorfman{\frkd_1+\mu_1,\frkd_2+\mu_2}+\conpairing{\frkd_1,\jetd f}(\frkd_2+\mu_2)\\
  &= f\Dorfman{\frkd_1+\mu_1,\frkd_2+\mu_2}+\conpairing{\rho(\frkd_1+\mu_1),\jetd f}(\frkd_2+\mu_2).
  \end{aligned}
\end{equation*}
This concludes the proof of (iii).

(iv). The right side of (iv) is
\[
\frac{1}{2}(\iota_{[\frkd_1,\frkd_2]_{\dev}}\mu_3+\iota_{\frkd_3}\LieDerivation_{\frkd_1}\mu_2
+\iota_{\frkd_2}\LieDerivation_{\frkd_1}\mu_3+\iota_{[\frkd_1,\frkd_3]_{\dev}}\mu_2)
\]
and the left hand side is
\[
\frac{1}{2}(\LieDerivation_{\frkd_1}i_{\frkd_2}\mu_3+\LieDerivation_{\frkd_1}i_{\frkd_3}\mu_2).
\]
Now the statement follows from the third one of \eqref{Cartan}.

(v). It immediately follows from \eqref{Dorfman} and \eqref{pair}.
\end{proof}

\begin{remark}
The properties of the omni-Lie algebroid can be axiomatized and this produces the definition of an $E$-Courant algebroid \cite{Chen-Liu-Sheng1}. Similarly it should be possible to axiomatize the properties of the higher omni-Lie algebroid listed in Theorem \ref{Thm:Property of omni} and produce a definition of \emph{higher $E$-Courant algebroid}. Specifically, let $E \to M$ be a vector bundle. Then a \emph{higher $E$-Courant algebroid} should be a quadruple $(\omni, \rho, \{-,-\}, (-,-)_+)$ where
\begin{itemize}
\item $\omni \to M$ is a vector bundle,
\item $\rho : \omni \to \dev E$ is a vector bundle map,
\item $\{-,-\} : \Gamma (\omni) \times \Gamma (\omni) \to \Gamma (\omni)$ is an $\mathbb R$-bilinear bracket, and
\item $(-,-)_+ : \omni \otimes \omni \to \jet_{n-1} E$ is a non-degenerate, symmetric, bilinear pairing.
\end{itemize}
Additionally, the structure maps $\rho, \{-,-\}, (-,-)_+$ should satisfy the properties (i)--(iv) in the statement of Theorem \ref{Thm:Property of omni}. Finally, we speculate that property (v) in Theorem \ref{Thm:Property of omni} should be replaced by the following identity
\[
(\Dorfman{e,e}, e')_+= \iota_{\rho(e')} \jetd (e,e)_+,
\]
for all $e, e' \in \Gamma(\omni)$. Investigating this definition goes beyond the scopes of this paper. Hopefully, we will follow this line of thoughts elsewhere.
\end{remark}

\begin{remark}
The Dorfman bracket on sections of the higher generalized tangent bundle $TM \oplus \wedge^n T^\ast M$ can be deformed by a closed $(n+2)$-form \cite{Bi2011}. Similarly, the higher Dorfman bracket can be deformed by a section of $\jet_{n+2} E$. To see this, take $\omega \in \Omega^{n+2}_{\jet E}$ and define a new \emph{deformed higher Dorfman bracket}
\[
\Dorfman{-,-}_{\omega} : \Gamma (\omni) \times \Gamma (\omni) \to \Gamma (\omni)
\]
 by
\begin{equation}\label{Twisting Bracket}
\Dorfman{e_1,e_2}_{\omega}=\Dorfman{e_1,e_2}+ \iota_{\rho(e_2)}\iota_{\rho(e_1)}\omega.
\end{equation}
Now, a long but straightforward computation shows that
\[
\{e_1,\{e_2,e_3\}_\omega\}_\omega-\{\{e_1,e_2\}_\omega,e_3\}_\omega-\{e_2,\{e_1,e_3\}_\omega\}_\omega = \iota_{\rho(e_3)}\iota_{\rho(e_2)}\iota_{\rho(e_1)} \jetd \omega
\]
For all $e_1, e_2, e_3 \in \Gamma (\omni)$.
In particular, $(\Gamma(\omni), \{-,-\}_\omega)$ is a Leibniz algebra if and only if $\jetd\omega=0$, i.e.~$\omega = \jetd \mu$ for some $\mu \in \Omega^{n+1}_{\jet E}$.
\end{remark}

\section{Higher Dirac-Jacobi structures}

In this section we study isotropic and involutive subbundles of a higher omni-Lie algebroid. So, let $E \to M$ be a vector bundle, let $n$ be a positive integer and let $\omni = \omni_n (E)$ be the higher omni-Lie algebroid of $E$.

\begin{Definition}
A subbundle $L \subset \omni$ is \emph{isotropic} if $(e_1, e_2)_+ = 0$ for all $e_1, e_2 \in L$ and it is \emph{involutive} when $\Dorfman{e_1, e_2} \in \Gamma (L)$ for all $e_1, e_2 \in \Gamma (L)$. A maximal isotropic and involutive subbundle of $\omni$ will be called a \emph{higher Dirac-Jacobi structure}.
\end{Definition}

\begin{remark}\label{rem:maximal}
Clearly, an isotropic subbundle $L \subset \omni$ is maximal isotropic if and only if, for every section $e \in \Gamma(\omni)$ such that $(e, e')_+ = 0$ for all $e' \in \Gamma (L)$, $e$ already belongs to $\Gamma (L)$.
\end{remark}

When $E = \ell$ is a line bundle, and $n = 1$, then higher Dirac-Jacobi structures $L \subset \omni$ are exactly Dirac-Jacobi structures, first studied in \cite{Wade2000, Wade2004} (see also \cite{Vitagliano}). Dirac-Jacobi structure encompass Jacobi structures, homogeneous Poisson structures, and hyperplane distributions as special cases \cite{Wade2000, Wade2004, Vitagliano}. For generic $n$ we will only consider higher Dirac-Jacobi structures projecting isomorphically on either $\dev E$ or $\jet_n E$.

\subsection{Higher Dirac-Jacobi structures projecting isomorphically on $\dev E$}

We begin with a form $\mu \in \Omega^{n+1}_{\jet E}$. In the following we denote by $B_\mu : \dev{E} \to \jet_n E$ the vector bundle map $\frkd \mapsto i_{\frkd} \mu$. The graph of $B_\mu$ is
\[
\operatorname{graph} B_\mu \defbe \left\{ \frkd + B_\mu(\frkd) : \frkd \in \dev E \right\} \subset \omni.
\]

\begin{theorem}
Let $\mu \in \Omega^{n+1}_{\jet E}$. The graph $L$ of $B_\mu$ is a maximal isotropic subbundle and every isotropic subbundle projecting isomorphically onto $\dev{E}$ arises in this way. Additionally, $L$ is involutive, hence a higher Dirac-Jacobi structure, if and only if $\mu$ is closed, hence exact.
\end{theorem}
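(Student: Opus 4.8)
The plan is to prove the four assertions—isotropy and maximality of $\operatorname{graph}B_\mu$, the fact that every isotropic subbundle projecting isomorphically onto $\dev E$ arises this way, and the involutivity criterion—by fiberwise \emph{Cartan calculus} on $\Omega^\bullet_{\jet E}$, relying on the two expressions for the bracket in \eqref{Dorfman} and on the identities \eqref{Cartan}. Isotropy is immediate: for $e_i = \frkd_i + \iota_{\frkd_i}\mu$, formula \eqref{pair} gives $2(e_1,e_2)_+ = \iota_{\frkd_1}\iota_{\frkd_2}\mu + \iota_{\frkd_2}\iota_{\frkd_1}\mu = 0$ because contractions anticommute. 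For maximality I would use Remark \ref{rem:maximal}: if $e = \frkr + \nu$ pairs trivially with every $\frkd + \iota_{\frkd}\mu$, then $\iota_{\frkd}(\nu - \iota_{\frkr}\mu) = 0$ for all $\frkd \in \dev E$; since a form in $\jet_n E \subseteq \Hom(\wedge^n \dev E, E)$ annihilated by all contractions must vanish, this forces $\nu = \iota_{\frkr}\mu = B_\mu(\frkr)$, so $e$ lies on the graph.

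For the converse, an isotropic $L$ on which $\rho$ restricts to an isomorphism is the graph of a bundle map $\psi : \dev E \to \jet_n E$, and isotropy translates into the skew-symmetry $\iota_{\frkd_1}\psi(\frkd_2) = -\iota_{\frkd_2}\psi(\frkd_1)$. This lets me define a totally antisymmetric $E$-valued form $\mu \in \Omega^{n+1}(\dev E, E)$ by $\mu(\frkd_0, \ldots, \frkd_n) = \psi(\frkd_0)(\frkd_1, \ldots, \frkd_n)$, for which $B_\mu = \psi$ tautologically.

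The one genuinely non-formal step—the main obstacle—is to check that this $\mu$ lands in the \emph{subbundle} $\jet_{n+1}E$ and not merely in $\Omega^{n+1}(\dev E, E)$. Here I would rewrite the defining condition \eqref{eq:J_E}, using \eqref{eq:lambda}, in the convenient form $\iota_\Phi \nu = \Phi \circ \iota_{\Id_E}\nu$, valid for $\nu \in \jet_n E$ and $\Phi \in \gl(E)$. Feeding this, together with the skew-symmetry of $\mu$, into the evaluation of $\iota_\Phi \mu = \psi(\Phi)$ produces $\iota_\Phi \mu = \Phi \circ \iota_{\Id_E}\mu$; then $\iota_{\Id_E}^2 = 0$ gives $\iota_\Phi \iota_{\Id_E}\mu = 0$ for all $\Phi \in \gl(E)$. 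The first identity, together with the second (which says $\iota_{\Id_E}\mu$ is basic, i.e.\ of the form $\jd^\ast \lambda$), are exactly the requirements for $\mu \in \jet_{n+1}E$, with $\jd^\ast \lambda_\mu = \iota_{\Id_E}\mu$. This is the part that genuinely uses both the isotropy of $L$ and the fact that $\psi$ takes values in $\jet_n E$.

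It remains to relate involutivity to closedness. Expanding $\Dorfman{e_1,e_2}$ for $e_i = \frkd_i + \iota_{\frkd_i}\mu$ via the second form of \eqref{Dorfman} and simplifying the $\jet_n E$-component with the three identities \eqref{Cartan}, the Lie-derivative terms cancel and one is left with $\Dorfman{e_1,e_2} = [\frkd_1,\frkd_2]_{\dev} + \iota_{[\frkd_1,\frkd_2]_{\dev}}\mu + \iota_{\frkd_2}\iota_{\frkd_1}\jetd\mu$. Hence $\Dorfman{e_1,e_2} \in \Gamma(L)$ exactly when $\iota_{\frkd_2}\iota_{\frkd_1}\jetd\mu = 0$, and letting $\frkd_1, \frkd_2$ range over all of $\dev E$—using that $\jetd\mu$ has degree $n+2 \geq 2$—involutivity is equivalent to $\jetd\mu = 0$. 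Finally, closedness upgrades to exactness by Remark \ref{rem:Cartan}: since $(\Omega^\bullet_{\jet E}, \jetd)$ is acyclic with contracting homotopy $\iota_{\Id_E}$, a closed $\mu$ satisfies $\mu = \jetd\,\iota_{\Id_E}\mu$, so $\mu$ is exact.
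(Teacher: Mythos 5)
Your proposal is correct and follows essentially the same route as the paper: isotropy and maximality via the pairing, the converse by packaging the skew-symmetric bundle map into a form $\mu \in \Omega^{n+1}(\dev E, E)$ and then verifying membership in $\jet_{n+1}E$ via the $\Id_E$-contraction, and involutivity reduced through the Cartan formulas \eqref{Cartan} to $\iota_{\frkd_2}\iota_{\frkd_1}\jetd\mu = 0$, with exactness from the contracting homotopy $\iota_{\Id_E}$. The only (cosmetic) divergence is that you check $\mu \in \jet_{n+1}E$ intrinsically through the identities $\iota_\Phi\mu = \Phi\circ\iota_{\Id_E}\mu$ and $\iota_\Phi\iota_{\Id_E}\mu = 0$, whereas the paper verifies the skew-symmetry of $\lambda_{B(\frkd_1)}(\jd\frkd_2,\ldots,\jd\frkd_n)$ and exhibits $\lambda_\mu$ explicitly; both hinge on the same use of the defining property of $\jet_n E$.
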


\begin{proof}
Let $\mu \in \Omega^{n+1}_{\jet E}$. It is obvious that $\operatorname{graph} B_\mu \subset \omni$ is an isotropic subbundle. It is also easy to see, using Remark \ref{rem:maximal}, that $\operatorname{graph} B_\mu$ is maximal isotropic. Conversely, let $L \subset \omni$ be an isotropic subbundle projecting isomorphically onto $\dev E$. Then $L$ is the graph of a, necessarily unique, vector bundle map $B : \dev E \to \jet_{n} E$. From isotropicity $B$ is skewsymmetric in the sense that
\[
\iota_{\frkd_1} B(\frkd_2) + \iota_{\frkd_2} B(\frkd_1) = 0
\]
for all $\frkd_1, \frkd_2 \in \dev E$. Hence $B = B_\mu$ for some $\mu \in \Omega^{n+1} (\dev E, E)$ and it remains to check that $\mu \in \Omega^{n+1}_{\jet E}$. So let $\Phi \in \gl (E)$, let $\frkd_1, \ldots, \frkd_n \in \dev E$, and compute
\[
\begin{aligned}
\iota_\Phi \mu (\frkd_1, \frkd_2, \ldots, \frkd_n) & = \mu (\Phi, \frkd_1, \frkd_2, \ldots, \frkd_n) \\
& = - \mu (\frkd_1, \Phi, \frkd_2, \ldots, \frkd_n) \\
& = - B(\frkd_1) (\Phi, \frkd_2, \ldots, \frkd_n) \\
& = - \Phi \circ \lambda_{B(\frkd_1)} (\jd (\frkd_2), \ldots, \jd (\frkd_n))
\end{aligned}
\]
Next we check that $\lambda_{B(\frkd_1)} (\jd \frkd_2, \ldots, \jd \frkd_n)$ is skew-symmetric in its arguments $\frkd_1, \ldots, \frkd_n$. Indeed
\[
\lambda_{B(\frkd_1)} (\jd \frkd_2, \ldots, \jd \frkd_n) = B(\frkd_1)(\Id_E, \frkd_2, \ldots, \frkd_n) = - \iota_{\Id_E} \mu (\frkd_1, \ldots, \frkd_n)
\]
which is skewsymmetric. This shows that there exists an $E$-valued $(n+1)$-form $\nu \in \Omega^{n+1} (M, E)$ such that $\lambda_{B(\frkd_1)} (\jd (\frkd_2), \ldots, \jd (\frkd_n)) = \nu (\jd (\frkd_1),\jd (\frkd_2), \ldots, \jd (\frkd_n))$ for all $\frkd_1, \ldots, \frkd_n \in \dev E$. Hence $\mu \in \Omega^{n+1}_{\jet E}$ and $\nu = - \lambda_\mu$.

For the second part of the statement, let $\mu \in \Omega^{n+1}_{\frkJ E}$, and let $e_1, e_2 \in \Gamma (B_\mu)$. So there are $\frkd_1, \frkd_2$ such that $e_i = \frkd_i + \iota_{\frkd_i} \mu$, $i = 1,2$, and
\[
\{ e_1, e_2 \} = [\frkd_1, \frkd_2]_\dev + \LieDerivation_{\frkd_1} \iota_{\frkd_2} \mu - \LieDerivation_{\frkd_2} \iota_{\frkd_2} + \jetd \iota_{\frkd_2} \mu.
\]
The latter belongs to $\operatorname{graph} B_\omega$ if and only if
\begin{equation}\label{eq:Luca_1}
\LieDerivation_{\frkd_1} \iota_{\frkd_2} \mu - \LieDerivation_{\frkd_2} \iota_{\frkd_2} + \jetd \iota_{\frkd_2} \mu = \iota_{[\frkd_1, \frkd_2]_\dev} \mu.
\end{equation}
Using \eqref{Cartan} we see that \eqref{eq:Luca_1} is equivalent to $\iota_{\frkd_1} \iota_{\frkd_2} \jetd \mu = 0,$
and, from the arbitrariness of $\frkd_1, \frkd_2$, this is equivalent to $\jetd \mu = 0$.
\end{proof}

The above proposition shows that higher Dirac-Jacobi structures of $\omni$ projecting isomorphically onto $\dev E$ are in one-to-one correspondence with closed, hence exact, forms in $\Omega^{n+1}_{\jet E}$. In their turn, \eqref{eq:d} shows that exact elements in $\Omega^{n+1}_{\jet E}$ are in one-to-one correspondence with $E$-valued $n$-forms on $M$. Summarizing, the assignment
\[
\nu \longmapsto \operatorname{graph} B_{\jetd \jd^\ast \nu}
\]
is a one-to-one correspondence between $\Omega^n (M, E)$ and higher Dirac-Jacobi structures projecting isomorphically onto $\dev E$. The case when $E = \ell$ is a line bundle is particularly interesting. In this case, higher Dirac-Jacobi structures projecting isomorphically onto $\dev \ell$ encompass (\emph{pre}-)\emph{multicontact structures}, i.e.~corank $n$ distributions on $M$ \cite{Vitagliano2}, as we now show.

Begin with an $\ell$-valued $n$-form $\nu \in \Omega^n (M, \ell)$. By definition, the \emph{kernel} of $\nu$ is the (non-necessarily smooth) distribution $\ker \nu \subset TM$ on $M$ consisting of tangent vectors $v$ such that $\iota_v \nu = 0$.

\begin{Definition}\label{def:multicont}
An $\ell$-valued $n$-form $\nu$ on $M$ is of \emph{multicontact type} if $\ker \nu$ has corank exactly equal to $n$.
\end{Definition}

Notice that the kernel of an $\ell$-valued $n$-form $\nu$ of multicontact type is a smooth and regular distribution. When $n = 1$, Definition \ref{def:multicont} simply says that $D \defbe \ker \nu$ is an hyperplane distribution, hence $\nu$ is an everywhere non-zero, hence surjective, $\ell$-valued $1$-form on $M$, and can be interpreted as the projection $TM \to TM/D \cong \ell$. Next proposition generalizes this picture to the possibly higher $n$ case.

\begin{proposition}
Let $\nu$ be an $\ell$-valued $n$-form of multicontact type. Then $D \defbe \ker \nu$ is a smooth and regular distribution and there is a canonical isomorphism $\wedge^n (TM/D) \cong \ell$. Conversely, let $D \subset TM$ be a corank $n$ distribution, so that $\ell = \wedge^n (TM/D)$ is a line bundle. Then there exists a canonical $\ell$-valued $n$-form of multicontact type $\nu_D$ such that $D = \ker \nu_D$. Correspondences
\[
\nu \mapsto \ker \nu, \quad \text{and} \quad D \mapsto \nu_D
\]
are mutually inverse.
\end{proposition}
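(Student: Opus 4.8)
The plan is to prove the two halves of the correspondence and then check that the assignments $\nu\mapsto\ker\nu$ and $D\mapsto\nu_D$ undo each other. The recurring device is that, since $\ell$ is a line bundle, the $\ell$-valued $n$-form $\nu$ gives a vector bundle morphism $\phi_\nu:TM\to\wedge^{n-1}T^\ast M\otimes\ell$, $v\mapsto\iota_v\nu$, whose fibrewise kernel is precisely $\ker\nu$. For the first direction, let $\nu$ be of multicontact type, so that $D:=\ker\nu$ has corank exactly $n$ at every point; equivalently $\dim_p D=\dim M-n$ is constant, and hence $\phi_\nu$ has constant rank $n$. By the standard fact that the kernel of a constant-rank bundle morphism is a smooth subbundle, $D=\ker\phi_\nu$ is a smooth, regular, corank $n$ distribution. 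To obtain the isomorphism $\wedge^n(TM/D)\cong\ell$, I would observe that $\iota_v\nu=0$ for $v\in D$ forces $\nu_p$, read as an alternating $n$-linear map, to vanish whenever one entry lies in $D_p$; thus $\nu$ descends to a section $\bar\nu$ of $\Hom(\wedge^n(TM/D),\ell)$. Both $\wedge^n(TM/D)$ and $\ell$ are line bundles, and $\bar\nu$ is nowhere zero exactly because the corank is not larger than $n$; a nowhere-vanishing morphism of line bundles is an isomorphism, so $\bar\nu:\wedge^n(TM/D)\to\ell$ is the desired canonical isomorphism.

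For the converse, given a corank $n$ distribution $D$ I set $\ell:=\wedge^n(TM/D)$ and let $\pi:TM\to TM/D$ be the projection, then define
\[
\nu_D(X_1,\ldots,X_n):=\pi(X_1)\wedge\cdots\wedge\pi(X_n)\in\wedge^n(TM/D)=\ell,
\]
that is $\nu_D=\wedge^n\pi\in\Hom(\wedge^n TM,\ell)=\Omega^n(M,\ell)$. Since $TM/D$ has rank $n$, a product $\pi(v)\wedge w_2\wedge\cdots\wedge w_n$ vanishes for all $w_2,\ldots,w_n$ if and only if $\pi(v)=0$; as $\pi$ is surjective this gives $\iota_v\nu_D=0\iff v\in D$, so $\ker\nu_D=D$ and $\nu_D$ is of multicontact type. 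This identity already gives one composite. For the other, starting from $\nu$ of multicontact type the construction of $\bar\nu$ yields $\nu=\bar\nu\circ\wedge^n\pi$; since $\nu_{\ker\nu}=\wedge^n\pi$ is the $\wedge^n(TM/\ker\nu)$-valued form and $\bar\nu$ is precisely the canonical identification of $\wedge^n(TM/\ker\nu)$ with $\ell$, transporting $\nu_{\ker\nu}$ along $\bar\nu$ returns $\nu$.

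The genuinely substantive point, and the step I expect to need the most care, is the smoothness and regularity of $D=\ker\nu$: this rests on $\phi_\nu$ having locally constant rank, which is exactly encoded by the multicontact hypothesis (constant corank $n$), together with the standard constant-rank theorem for bundle morphisms. Everything else is pointwise linear algebra in line bundles---above all the fact that a nowhere-vanishing morphism between two line bundles is an isomorphism---promoted fibrewise; the only additional care needed is to verify that $\bar\nu$ and $\nu_D$ are manifestly canonical, i.e.\ independent of any auxiliary local trivialisation of $\ell$.
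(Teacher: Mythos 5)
Your proof is correct and follows essentially the same route as the paper: descend $\nu$ to a nowhere-vanishing morphism of line bundles $\wedge^n(TM/D)\to\ell$ in one direction, and take $\nu_D=\wedge^n\pi$ in the other. You merely supply details the paper leaves implicit (smoothness of $\ker\nu$ via the constant-rank theorem, why the descended map is an isomorphism, and the explicit check that the two composites are mutually inverse), all of which are sound.
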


\begin{proof}
Let $\ell \to M$ be a line bundle and let $\nu \in \Omega^n (M, \ell)$ be of multicontact type. Denote $D \defbe \ker \nu$, and consider the vector bundle map
\[
A : \wedge^n (TM/D) \to \ell, \quad (v_1 + D) \wedge \cdots \wedge (v_n + D) \mapsto \nu (v_1, \ldots, v_n).
\]
Clearly $A$ is an isomorphism. Conversely, let $D \subset TM$ be a corank $n$ distribution. Put $\ell = \wedge^n (TM/D)$ and let $\nu_D : \wedge^n (TM/D) \to \ell$ be the $n$-form defined by
\[
\nu_D (v_1, \ldots, v_n) \defbe (v_1 +D) \wedge \cdots \wedge (v_n + D).
\]
It is easy to see that $\ker \nu_D = D$, in particular $\nu_D$ is of multicontact type. This concludes the proof.
\end{proof}

\subsection{Higher Dirac-Jacobi structures projecting isomorphically on $\jet_{n} E$}

We now pass to higher Dirac-Jacobi structures $L$ projecting isomorphically on $\jet_{n} E$. In particular, $L$ is the graph of a vector bundle map $B : \jet_n E \to \dev E$:
\[
L = \operatorname{graph} B \defbe \{ B(\mu) + \mu : \mu \in \jet_n E \} \subset \omni.
\]
We want to characterize isotropicity and involutivity of $L$ in terms of $B$.
The case $n = 1$ is studied in details in \cite{Chen-Liu-Sheng2} (see also \cite{Vitagliano} for the case $n = \operatorname{rank} E = 1$): in this case $B$ can be seen as a first order bidifferential operator $\Delta : \Gamma (E) \times \Gamma (E) \to \Gamma (E)$ via
\[
\Delta (e_1, e_2) = B(\jetd e_1)(e_2).
\]
In particular, $\Delta$ is a derivation in the first entry. Then $L$ is isotropic if and only if $\Delta$ is skew-symmetric, hence it is a bi-derivation. Additionally, $L$ is a Dirac subbundle if and only if 1) $\Delta$ is a Jacobi bracket on $\ell$, if $E = \ell$ is a line bundle, and 2) $\Delta$ is the Lie bracket of a, necessarily unique, Lie algebroid structure on $E$, if $\operatorname{rank} E > 1$. See \cite{Chen-Liu-Sheng2} for all the details.

Next we assume $n > 1$. We have the following

\begin{theorem}\label{theor:n<dimM}
Let $L \subset \omni$ be an isotropic subbundle projecting isomorphically onto $\jet_n E$. If $1 < n < \dim M + 1$, then $L = 0 \oplus \jet_n E$, and, in this case, $L$ is necessarily a higher Dirac-Jacobi structure.
\end{theorem}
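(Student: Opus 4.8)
The plan is to prove that the only such $L$ is $0 \oplus \jet_n E$, i.e.\ that the vector bundle map $B : \jet_n E \to \dev E$ with $L = \operatorname{graph} B = \{B(\mu)+\mu : \mu \in \jet_n E\}$ must vanish; the Dirac-Jacobi claim is then immediate. First I would record what isotropy says about $B$. Exactly as in the $\dev E$-projecting case one has $(B(\mu)+\mu,\, B(\nu)+\nu)_+ = \tfrac12(\iota_{B(\mu)}\nu + \iota_{B(\nu)}\mu)$, so $L$ is isotropic if and only if
\[
\iota_{B(\mu)}\nu + \iota_{B(\nu)}\mu = 0 \quad \text{in } \jet_{n-1}E, \qquad \forall\, \mu,\nu \in \jet_n E .
\]
This is a pointwise, tensorial identity, so I would fix a point of $M$ and a local linear connection on $E$; the connection splits the Atiyah sequence \eqref{eq:SES_dev} and the sequence \eqref{eq:SES_higher}, giving pointwise identifications $\dev E \cong TM \oplus \gl(E)$ and $\jet_n E \cong (\wedge^n T^\ast M \otimes E) \oplus (\wedge^{n-1} T^\ast M \otimes E)$, under which I write $\mu \leftrightarrow (\sigma_\mu, \lambda_\mu)$.

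Computing $\iota_\frkd$ in this splitting (the analogue of \eqref{eq:iota}), I would expand the identity into its two components, valued in $\wedge^{n-1}T^\ast M \otimes E$ and in $\wedge^{n-2}T^\ast M \otimes E$. The second component uses $\lambda_{\iota_\frkd \mu} = -\iota_{\jd(\frkd)}\lambda_\mu$; specializing one entry to $\operatorname{im} \e_\bullet$ (where $\lambda = 0$) forces $\jd \circ B$ to vanish on $\operatorname{im}\e_\bullet$, hence to factor through $\p_\bullet$ as a map $\bar b : \wedge^{n-1}T^\ast M \otimes E \to TM$. The first component splits into two pieces: setting the $\wedge^n T^\ast M \otimes E$-entries to zero yields the skew condition $\Phi_\lambda(\alpha)\circ\beta + \Phi_\lambda(\beta)\circ\alpha = 0$ on the $\gl(E)$-part $\Phi_\lambda$ of $B$, which gives $\Phi_\lambda = 0$ by a short linear-independence argument (valid once $\binom{\dim M}{n-1} \geq 2$); the remaining terms give the mixed relation
\[
\iota_{\bar b(\alpha)}\tau + \Phi_\sigma(\tau)\circ\alpha = 0, \qquad \forall\, \alpha \in \wedge^{n-1}T^\ast M \otimes E,\ \tau \in \wedge^n T^\ast M \otimes E,
\]
tying $\bar b$ to the remaining endomorphism part $\Phi_\sigma : \wedge^n T^\ast M \otimes E \to \gl(E)$.

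The decisive step -- which I expect to be the real obstacle -- is to extract $\bar b = 0$ from this relation, and I would do it by a rank count. Fix a decomposable $\alpha = \eta \otimes s$ and put $V := \bar b(\alpha) \in TM$; the relation says that $\{\iota_V\tau : \tau \in \wedge^n T^\ast M \otimes E\} = \operatorname{im}(\iota_V)\otimes E$ is contained in $\eta \otimes E$, a space of dimension $\operatorname{rank} E$. But for $V \neq 0$ the contraction $\iota_V : \wedge^n T^\ast M \to \wedge^{n-1}T^\ast M$ has rank $\binom{\dim M - 1}{n-1}$, so $\operatorname{im}(\iota_V)\otimes E$ has dimension $\binom{\dim M - 1}{n-1}\cdot\operatorname{rank} E$. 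As soon as $\binom{\dim M - 1}{n-1} \geq 2$ this cannot fit, forcing $V = 0$ and hence $\bar b = 0$. Feeding $\bar b = 0$ back into the mixed relation gives $\Phi_\sigma = 0$, and together with $\Phi_\lambda = 0$ and the vanishing of the tangential part this yields $B = 0$, i.e.\ $L = 0 \oplus \jet_n E$. This is precisely where I would scrutinise the dimension hypothesis: the count is sharp, since $\binom{\dim M-1}{n-1}\geq 2$ holds exactly for $n \leq \dim M - 1$, and the borderline top case $n = \dim M$ -- where $\iota_V$ has rank one and the argument no longer bites -- would have to be examined on its own.

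Finally, $0 \oplus \jet_n E$ is isotropic by the formula for $(-,-)_+$, and involutive because every term of the bracket \eqref{Dorfman} carries a factor in $\dev E$, so $\Dorfman{\mu,\nu} = 0$ for $\mu,\nu \in \Gamma(\jet_n E)$. It is maximal isotropic by Remark \ref{rem:maximal}: if $\frkd + \mu$ pairs to zero with all of $0 \oplus \jet_n E$ then $\iota_\frkd \nu = 0$ for every $\nu \in \jet_n E$, and non-degeneracy of $\langle -,-\rangle_{\jet E}$ in the $\dev E$-slot (treating the cases $\jd(\frkd) \neq 0$ and $\frkd \in \gl(E)$ separately) forces $\frkd = 0$. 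Hence $L$ is a higher Dirac-Jacobi structure.
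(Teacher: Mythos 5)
Your reduction of isotropy to pointwise linear algebra --- splitting $\dev E$ and $\jet_n E$ by a local connection, separating the two components of $\iota_{B(\mu)}\nu+\iota_{B(\nu)}\mu=0$, and killing the pieces of $B$ by a rank count --- is a genuinely different route from the paper's, which works with the explicit local generators $\operatorname{vol}_I\otimes\varepsilon_\alpha$ and $\operatorname{vol}_J\wedge\jetd\varepsilon_\alpha$ of $\Omega^n_{\jet E}$ and annihilates the coefficients of $B$ family by family. The structure of your argument is correct as far as it goes, but it has a genuine gap that you have located yourself: the decisive estimate $\binom{\dim M-1}{n-1}\geq 2$ holds only for $2\leq n\leq \dim M-1$, whereas the theorem claims the range $1<n<\dim M+1$, which includes $n=\dim M$. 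Flagging that the borderline case ``would have to be examined on its own'' is not a proof of that case, so the statement is not established on its full range.

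Moreover, the gap is not cosmetic: at $n=m:=\dim M$ your mixed relation only forces $\iota_{\bar b(\eta\otimes s)}\operatorname{vol}\in\langle\eta\rangle$, and this has nonzero solutions when $\operatorname{rank}E=1$. Concretely, take $M=\Real^2$, $n=2$, $E=\Real_M$, $\operatorname{vol}=dx^1\wedge dx^2$, and define $B(\widetilde\mu_0,\widetilde\mu_1)=\bigl(V_{\widetilde\mu_1},\,\widetilde\mu_0/\operatorname{vol}\bigr)$ where $V_\eta$ is the vector field with $\iota_{V_\eta}\operatorname{vol}=-\eta$. Using \eqref{eq:tilde6} one checks on all pairs of generators that $\widetilde\iota_{B(\mu)}\nu+\widetilde\iota_{B(\nu)}\mu=0$, so this is a \emph{nonzero} isotropic subbundle projecting isomorphically onto $\jet_2\Real_M$. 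Hence for line bundles the conclusion $L=0\oplus\jet_nE$ actually fails at $n=\dim M$, and no argument can close your gap there without an extra hypothesis (either $n<\dim M$, or $\operatorname{rank}E\geq 2$ --- in the latter case your condition coming from the $\wedge^{n-2}T^\ast M\otimes E$-component, applied to two elements of the $\frkj_\bullet$-part, yields $c(s')s-c(s)s'=0$ and hence kills the residual solution). Be aware that the paper's own appendix has the same blind spot: the step ``as the length of $I$ is at least $1$'' breaks down when $|I|=m-n=0$, and the remaining coefficients are dismissed with ``in a similar way''; so your instinct to scrutinise the top case was exactly right, and the honest outcome is that both your proof and the theorem as stated need the hypothesis sharpened.
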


\begin{proof}
See Appendix \ref{appendix:proof}.
\end{proof}

It remains to study the case $n = \dim M + 1$. In order to state the main result of this section, we need to give a new definition. So, let $E \to M$ be a vector bundle.

\begin{Definition}
A \emph{volume with values Lie algebras} is a section $Z$ of
\[
\wedge^{\mathrm{top}}TM \otimes \Hom(\wedge^2 E, E)
\]
such that, for every top form $\Omega \in \Omega^{\mathrm{top}} (M)$,
$
\conpairing{\Omega, Z} \in \Gamma (\Hom(\wedge^2 E, E))
$
 gives to $E$ the structure of a bundle of Lie algebras.
\end{Definition}

In the next proposition we show that, when $n = \dim M +1$, a higher Dirac-Jacobi structure in $\omni$ projecting isomorphically onto $\jet_n E$ is equivalent to a volume with values Lie algebras $Z$. Before stating our result we need some remarks. Let $m = \dim M$, and notice that, in view of \eqref{eq:iso},
\[
\jet_{m+1} E \cong \wedge^m T^\ast M \otimes E.
\]
In this case, a vector bundle map $B : \jet_n E \to \dev E$ can be seen as a vector bundle map $B : \wedge^m T^\ast M \otimes E \to \dev E$ or, equivalently, as a section $Z$ of $\wedge^{\mathrm{top}}TM \otimes \Hom(\wedge^2 E, E)$.
Now the bundle $\Hom(\wedge^2 E, E)$ embeds canonically into $\Hom (E, \dev E)$ via
\[
\Hom(\wedge^2 E, E) \hookrightarrow \Hom (E, \dev E), \quad b \mapsto \phi_b
\]
where
\[
\phi_b (e_1) (e_2) \triangleq b(e_1, e_2), \quad e_1, e_2 \in E.
\]
\begin{theorem}
Let $n = m +1 = \dim M + 1$, and assume that $Z$ be a section of  $\wedge^{m}TM \otimes \Hom(\wedge^2 E, E)$. Denote by $B_Z : \jet_{m+1} E \to \dev E$ the associated vector bundle map. The graph $L$ of $B_Z$ is a maximal isotropic subbundle of $\omni$ and every isotropic subbundle projecting isomorphically onto $\jet_{m+1} E$ arises in this way. Additionally, $L$ is involutive, hence a higher Dirac-Jacobi structure, if and only if $Z$ is a volume with values Lie algebras.
\end{theorem}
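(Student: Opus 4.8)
The plan is to reduce everything, via the top-degree identifications $\jet_{m+1}E \cong \wedge^m T^\ast M \otimes E$ and $\jet_{m+2}E = 0$ (both immediate from \eqref{eq:iso}, since $\Omega^{m+1}(M,E) = \Omega^{m+2}(M,E) = 0$), to a pointwise computation with the Cartan calculus of Remark \ref{rem:Cartan}. Throughout I identify $\mu \in \Gamma(\jet_{m+1}E)$ with $\lambda_\mu \in \Omega^m(M,E)$, I fix a local volume form $\Omega$ with dual multivector $\xi$ (so $\conpairing{\Omega,\xi} = 1$), and I write $Z = \xi \otimes b$ with $b = \conpairing{\Omega, Z} \in \Gamma(\Hom(\wedge^2 E, E))$ and $\lambda_\mu = \Omega \otimes s$ for a local $s \in \Gamma(E)$; under these conventions $B_Z(\mu) = \phi_b(s) = b(s, -) \in \Gamma(\gl(E))$.

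First I would identify the isotropic subbundles. Such an $L$ is the graph of a unique vector bundle map $B : \jet_{m+1}E \to \dev E$, and isotropicity reads $\iota_{B(\mu)}\nu + \iota_{B(\nu)}\mu = 0$ for all $\mu, \nu$. Expanding with \eqref{eq:iota}, the $\Omega^{m-1}(M,E)$-component of this identity is $\iota_{\jd(B(\mu))}\lambda_\nu + \iota_{\jd(B(\nu))}\lambda_\mu = 0$; specializing to $\mu = \nu$ and using that $\lambda_\mu = \Omega\otimes s$ is built from a volume form forces $\jd\circ B = 0$, i.e. $B$ takes values in $\gl(E)$. With $\jd\circ B = 0$ the remaining $\Omega^m(M,E)$-component collapses to $b(s,t) + b(t,s) = 0$, where $b(s,t) \defbe B(\Omega\otimes s)(t)$, so that $b \in \Hom(\wedge^2 E, E)$ and $B = B_Z$ with $Z = \xi\otimes b$; the converse is the same computation read backwards. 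Maximality follows from Remark \ref{rem:maximal}: a section $e = \frkd + \mu$ pairing trivially with $\Gamma(L)$ can, after subtracting $B_Z(\mu) + \mu \in \Gamma(L)$, be assumed of the form $\frkr + 0$, and then $(\frkr + 0, B_Z(\nu)+\nu)_+ = \tfrac12\iota_\frkr\nu$ must vanish for all $\nu$; running \eqref{eq:iota} once more (with $\lambda_\nu = \Omega\otimes t$) gives first $\jd(\frkr) = 0$ and then $\frkr(t) = 0$ for all $t$, hence $\frkr = 0$ and $e \in \Gamma(L)$.

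For the involutivity criterion I would compute $\Dorfman{e_1, e_2}$ for $e_1 = B_Z(\mu)+\mu$ and $e_2 = B_Z(\nu)+\nu$. Setting $\frkd = B_Z(\mu)$, $\frkr = B_Z(\nu) \in \Gamma(\gl(E))$, the vanishing $\jetd\mu = 0$ (as $\jet_{m+2}E = 0$) removes the $\iota_\frkr\jetd\mu$ term in \eqref{Dorfman}, leaving $\Dorfman{e_1,e_2} = [\frkd,\frkr]_\dev + \LieDerivation_\frkd\nu$. Since $\frkd,\frkr$ are endomorphisms, $[\frkd,\frkr]_\dev$ is their commutator, and \eqref{eq:Lie} identifies $\LieDerivation_\frkd\nu$ with $\Omega\otimes b(s,t)$, i.e. with $b(s,t) \in \Gamma(E)$ in the trivialization. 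Thus $L$ is involutive precisely when $[\frkd,\frkr] = B_Z(\LieDerivation_\frkd\nu)$, which, evaluated on an arbitrary $u \in \Gamma(E)$, is exactly
\[
b(s, b(t,u)) - b(t, b(s,u)) = b(b(s,t), u),
\]
the (Leibniz form of the) Jacobi identity for $b$. As $b$ is already skew and multiplying a bracket by a nowhere-vanishing function preserves the Jacobi identity, $b$ is a fibrewise Lie bracket if and only if $\conpairing{\Omega, Z}$ is for every top form $\Omega$, i.e. if and only if $Z$ is a volume with values Lie algebras.

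The hard part is purely organizational: keeping the two identifications ($\jet_{m+1}E \cong \wedge^m T^\ast M\otimes E$ and $\gl(E) = \Hom(E,E)$) consistent so that the formal higher Dorfman bracket genuinely collapses — via \eqref{eq:iota}, \eqref{eq:Lie} and $\jetd\mu = 0$ — to the algebraic Jacobi identity. In particular one must check that the two surviving terms are $\CWM$-linear in $\mu$ and $\nu$, so that involutivity is a tensorial, pointwise condition and the auxiliary local choice of $\Omega$ (and hence of $b$) is immaterial.
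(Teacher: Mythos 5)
Your proposal is correct and follows essentially the same route as the paper: identify $\jet_{m+1}E$ with $\wedge^m T^\ast M\otimes E$ via a local volume form, split the isotropy condition into the two components of \eqref{eq:iso} (one forcing $\jd\circ B=0$, the other forcing skew-symmetry of $b$), and use the top-degree vanishing of $\jetd\mu$ to collapse the Dorfman bracket so that involutivity becomes the Jacobi identity for $b=\conpairing{\Omega,Z}$. The only differences are cosmetic — you get $\jd\circ B=0$ by polarization where the paper uses a local frame, and you spell out the maximality check that the paper leaves to the reader.
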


\begin{proof}
As in Remark \ref{rem:split} we denote by
\[
\frkj_{\bullet} : \Omega^{\bullet -1}(M,E) \to \Omega^\bullet_{\jet E}, \quad \lambda \mapsto \jetd \jd^\ast \lambda
\]
the embedding. As already remarked, $\frkj_{m+1}$ is actually a $C^\infty (M)$-linear isomorphism. Hence it comes from a vector bundle isomorphism that we denote again by
\[
\frkj_{m+1} : \wedge^m T^\ast M \otimes E \to \jet_{m+1} E.
\]
Now, let $L \subset \omni$ be a subbundle projecting isomorphically onto $\jet_{m+1} E$. So $L$ is the graph of a vector bundle map $B : \jet_{m+1} E \to \dev E$. We work locally and fix a volume form $\operatorname{vol}$ on $M$. Clearly $B$ is completely determined by the composition
\[
\xymatrix{E \ar[r]^-{\operatorname{vol} \otimes -} &  \wedge^m T^\ast M \otimes E \ar[r]^-{\frkj_{m+1}} & \jet_{m+1} E \ar[r]^-B & \dev E},
\]
that we denote by
\[
B_E : E \to \dev E.
\]
Notice that
\begin{equation}\label{eq:B_E}
\frkj_{m+1} (\operatorname{vol} \otimes e) = (-1)^{m} \operatorname{vol} \wedge \jetd e
\end{equation}
for all $e \in \Gamma (E)$.
First we want to show that $L$ is isotropic if and only if $B = B_Z$ for some section $Z$ of $\wedge^m TM \otimes \Hom(\wedge^2 E, E)$ (and, in this case, it is maximal isotropic). This is equivalent to the following two conditions:
\[
\sigma \circ B_E = 0, \quad \text{and} \quad B_E (e_1)(e_2) + B_E (e_2)(e_1)=0
\]
for all $e_1, e_2 \in \Gamma(E)$, and, in this case, $\conpairing{\operatorname{vol}, Z} (e_1,e_2) = B_E (e_1)(e_2)$. Now, $L$ is isotropic if and only if
\[
\conpairing{B(\mu_1), \mu_2}_{\jet E} +  \conpairing{B(\mu_2), \mu_1}_{\jet E} = 0
\]
for all $\mu_1, \mu_2 \in \jet_{m+1} E$. In view of the above discussion, it is enough to consider $\mu_1, \mu_2$ of the form
\[
\mu_i = \frkj_{m+1} (\operatorname{vol} \otimes e_i) = (-1)^m \operatorname{vol}\wedge \jetd e_i , \quad e_i \in \Gamma (E), \quad i = 1,2.
\]
where we used \eqref{eq:B_E}. In this case,
\[
\begin{aligned}
& \conpairing{B(\mu_1), \mu_2}_{\jet E} +  \conpairing{B(\mu_2), \mu_1}_{\jet E} \\
& = (-1)^m \left(\iota_{B_E (e_1)} \left(\operatorname{vol}\wedge \jetd e_2\right)+  \iota_{B_E (e_2)} \left(\operatorname{vol}\wedge \jetd e_1\right) \right)\\
& = (-1)^m \left( \left(\iota_{\sigma (B_E(e_1))} \operatorname{vol}\right)\wedge \jetd e_2+  \left(\iota_{\sigma (B_E(e_2))}\operatorname{vol}\right)\wedge \jetd e_1\right) \\
& \quad +\operatorname{vol} \wedge \left(B_E(e_1)(e_2) + B_E(e_2)(e_1) \right).
\end{aligned}
\]
Denote by $\mu \in \Omega^m_{\jet E}$ the latter expression. It vanishes if and only if
\[
\iota_{\Id_E} \mu = 0, \quad \text{and} \quad \iota_{\Id_E} \jetd \mu = 0.
\]
The first condition reads
\begin{equation}\label{eq:sigma_B}
\left(\iota_{\sigma (B_E(e_1))}\operatorname{vol}\right) \otimes e_2 + \left(\iota_{\sigma(B_E(e_2))}\operatorname{vol}\right) \otimes e_1 = 0,
\end{equation}
while the second condition simply reads
\[
\operatorname{vol}\wedge \left(B_E(e_1)(e_2) + B_E(e_2)(e_1) \right).
\]
or, which is the same, simply
\[
B_E(e_1)(e_2) + B_E(e_2)(e_1) = 0.
\]
So, it remains to show that condition \eqref{eq:sigma_B} holds for all $e_1,e_2$ if and only if $\sigma \circ B_E = 0$. The ``if'' part is obvious. For the ``only if'' part choose a local basis $(\partial_i)$ for $\mathfrak X (M)$, and a local basis $(\varepsilon_\alpha)$ for $\Gamma (E)$. Then there are local functions $B_\alpha^i$ such that
\[
\sigma(B_E(\varepsilon_\alpha)) = B^i_\alpha \partial_i.
\]
Finally, write condition \eqref{eq:sigma_B} for $e_1 = \varepsilon_\alpha$ and $e_2 = \varepsilon_\beta$ to get
\[
\iota_{\partial_i} \operatorname{vol} \otimes \left( B_\alpha^i \varepsilon_\beta + B_\beta^i \varepsilon_\alpha \right) = 0 \ \Longrightarrow \  B_\alpha^i \varepsilon_\beta + B_\beta^i \varepsilon_\alpha = 0,
\]
which in turn implies $B_\alpha^i = 0$ for all $\alpha, i$, hence $\sigma \circ B_E = 0$. An easy check reveals that $L$ is also a maximal isotropic subbundle. This concludes the proof of the first part of the statement.

For the second part, let $Z$ be a section of $\wedge^m TM \otimes \Hom (\wedge^2 E, E)$, let $B_Z : \jet_{m+1} E \to \dev E$ be the associated vector bundle map, and let $L \subset \omni$ be its graph. Then $L$ is involutive if and only if, for all $\mu_1, \mu_2 \in \Omega^{m+1}_{\jet E}$,
\[
\Dorfman{B_Z(\mu_1) + \mu_1, B_Z(\mu_2) + \mu_2} \in \Gamma (L).
\]
Now, as $\mu_2$ has top degree
\[
\Dorfman{B_Z(\mu_1) + \mu_1, B_Z(\mu_2) + \mu_2} = [B_Z(\mu_1), B_Z(\mu_2)] + \LieDerivation_{B_Z(\mu_1)} \mu_2,
\]
which is in $\Gamma (L)$ if and only if
\begin{equation}\label{eq:B_Z}
B_Z(\LieDerivation_{B_Z(\mu_1)} \mu_2) = [B_Z(\mu_1), B_Z(\mu_2)].
\end{equation}
We fix again a local volume form $\operatorname{vol}$ so that $\mu_i = \operatorname{vol} \wedge \jetd e_i$, $e_i = \Gamma (E)$, $i = 1,2$. In particular
\begin{equation} \label{eq:B_Z_2}
\LieDerivation_{B_Z(\mu_1)} \mu_2 = \LieDerivation_{B_Z(\mu_1)} \operatorname{vol} \wedge \jetd e_2 = \operatorname{vol} \wedge \jetd B_Z(\mu_1)(e_2),
\end{equation}
where we used that $\sigma \circ B_Z = 0$. Finally, we put
\[
b \defbe \conpairing{\operatorname{vol}, Z} : \wedge^2 E \to E,
\]
hence $B_Z(\mu_i) = b(e_i, -)$, and, taking into account \eqref{eq:B_Z_2}, condition \eqref{eq:B_Z} reads
\[
[b(e_1, -), b(e_2, -)] = b(b(e_1,e_2), -).
\]
Applying both sides to a third section $e_3 \in \Gamma (E)$, after reorganizing the three terms, we get
\[
b(e_1, b(e_2, e_3)) + b(e_2, b(e_3, e_1)) + b(e_3, b(e_1,e_2)) = 0,
\]
which is the Jacobi identity for $b$. This concludes the proof.
\end{proof}


\section{Higher extended generalized tangent bundle}

When $E = \Real_M := M \times \Real$ is the trivial line bundle, the structure maps in the higher omni-Lie algebroid can be expressed entirely in terms of standard Cartan calculus. This expressions are sometimes useful, and we present them in this section. This section will also serve as a dictionary from the trivial \cite{Iglesias2002, Iglesias2006, Leon1, Leon1997, Wade2000} to the non-trivial \cite{Kirillov1976, Vitagliano} line bundle case.

We begin with a more general situation which is of an independent interest. Namely, we assume $E \to M$ is a generic vector bundle equipped with a linear connection $\nabla$. In this case, the short exact sequence \eqref{eq:SES_dev} splits canonically via
\[
\nabla: TM \to \dev E, \quad X \mapsto \nabla_X.
\]
Accordingly, there is a direct sum decomposition of vector bundles
\begin{equation}\label{eq:nabla_split}
\dev E \cong TM \oplus \gl (E)
\end{equation}
and every derivation $\frkd \in \Gamma (\dev E)$ can be uniquely written in the form
\begin{equation}\label{eq:partial_split}
\frkd = \nabla_X + \Phi
\end{equation}
where $X = \jd (\frkd)$ and $\Phi$ is a section of $\gl (E)$. In what follows we will often use \eqref{eq:nabla_split} to identify $\frkd$ with the pair $(X, \Phi)$.

Now, recall that the short exact sequence \eqref{eq:SES_Omega} splits in the category of graded vector spaces, but does not split canonically in the category of graded $\Omega^\bullet (M)$-modules. However, connection $\nabla$ defines an $\Omega^\bullet (M)$-linear splitting:
\[
\frkj_\bullet^\nabla : \Omega^{\bullet -1} (M, E) \to \Omega^\bullet_{\jet E}, \quad \lambda \mapsto \frkj_\bullet \lambda - \e_\bullet (d^\nabla \lambda),
\]
where $d^\nabla : \Omega^{\bullet-1} (M, E) \to \Omega^\bullet (M, E)$ is the connection ``differential''. As a consequence, there is an isomorphism of graded $\Omega^\bullet (M)$-modules
\begin{equation}\label{eq:iso_nabla}
\Omega^\bullet_{\jet E} \cong \Omega^\bullet (M, E) \oplus \Omega^{\bullet-1}(M,E),
\end{equation}
identifying $\mu \in \Omega^k_{\jet E}$ with a pair consisting of an $E$-valued $k$-form, and an $E$-valued $(k-1)$-form, that we denote by $(\widetilde{\mu}_0, \widetilde{\mu}_1)$ in order to distinguish it from the pair $(\mu_0, \mu_1)$ of Section \ref{Sec:vect_form}. Using the same notation as in Section \ref{Sec:vect_form}, it is easy to see that
\begin{equation}\label{eq:mu_nabla}
(\widetilde{\mu}_0, \widetilde{\mu}_1) = (\mu_0 + d^\nabla \mu_1, \mu_1).
\end{equation}
Now, we want to describe all natural operations on $\Omega^\bullet_{\jet E}$ in terms of the isomorphisms \eqref{eq:nabla_split} and \eqref{eq:iso_nabla}. To do this, we first recall two basic facts about linear connections and their connection differential.

Firstly, any $\gl(E)$-valued form on $M$, $\Phi \in \Omega^\bullet(M, E)$, defines a degree $|\Phi|$, graded homomorphism of $\Omega^\bullet (M)$-modules, also denoted by
\[
\Phi : \Omega^\bullet (M, E) \to \Omega^{\bullet + |\Phi|} (M, E),
\]
in the obvious way. In particular, the curvature $R$ of $\nabla$ is a $\gl (E)$-valued $2$-form on $M$, hence it defines a degree $2$, graded homomorphism
\[
R : \Omega^\bullet (M, E) \to \Omega^{\bullet + 2} (M, E),
\]
and we have $(d^\nabla)^2 = R$.

Secondly, the connection $\nabla$ defines a connection in the whole tensor algebra of $E$, in particular on $\gl (E)$. We denote again by $\nabla$ the induced connection. For any $\Phi \in \Omega^\bullet (M, \gl (E))$ we have
\begin{equation}\label{eq:d_nablaPhi}
d^\nabla \Phi = [d^\nabla, \Phi],
\end{equation}
where, in the left hand side, $d^\nabla$ is the differential of the connection in $\gl (E)$, $\Phi$ is a $\gl(E)$-valued form, but $d^\nabla \Phi$ is interpreted as a graded homomorphism $\Omega^\bullet (M, E) \to \Omega^{\bullet + |\Phi| + 1} (M, E)$, while, in the right hand side, $d^\nabla$ is the differential of the connection in $E$ and $\Phi$ is interpreted as a graded homomorphism $\Omega^\bullet (M, E) \to \Omega^{\bullet + |\Phi|} (M, E)$.

Finally, let $\omega \in \Omega^\bullet (M)$, and let $\frkd = \nabla_X + \Phi \in \Gamma (\dev E)$. The multiplication by $\omega$, the de Rham differential, the contraction and the Lie derivative along $\frkd$ induce new operations on $\Omega^\bullet (M, E) \oplus \Omega^{\bullet-1} (M, E)$ via \eqref{eq:iso_nabla}. The latter will be denoted by
\[
\omega \, \widetilde{\wedge}\,  -, \ \widetilde{\jetd},\  \widetilde{\iota}_{(X, \Phi)},\ \widetilde{\LieDerivation}_{(X, \Phi)},
\]
 respectively, in order to distinguish them from the operations on $\Omega^\bullet (M, E) \oplus \Omega^{\bullet-1} (M, E)$ discussed in Section (\ref{Sec:vect_form}) (and independent of $\nabla$). A direct computation exploiting \eqref{eq:partial_split}, \eqref{eq:mu_nabla} and \eqref{eq:d_nablaPhi} shows that, for all $\mu \in \Omega^\bullet_{\jet E}$,
\begin{align}
\omega \, \widetilde{\wedge} \, (\widetilde{\mu}_0, \widetilde{\mu}_1) & = (\omega \wedge \widetilde{\mu}_0, (-1)^{|\omega|} \omega \wedge \widetilde{\mu}_1) \label{eq:tilde1}\\
\widetilde{\jetd}(\widetilde{\mu}_0, \widetilde{\mu}_1) & = (d^\nabla \widetilde{\mu}_0 - R(\widetilde{\mu}_1), \widetilde{\mu}_0 - d^\nabla \widetilde{\mu}_1) \label{eq:tilde2}\\
 \widetilde{\iota}_{(X, \Phi)}(\widetilde{\mu}_0, \widetilde{\mu}_1) & = (\iota_X \widetilde{\mu}_0 + \Phi (\widetilde{\mu}_1), - \iota_X \widetilde{\mu}_1) \label{eq:tilde3}\\
\widetilde{\LieDerivation}_{(X, \Phi)}(\widetilde{\mu}_0, \widetilde{\mu}_1) & = \left(\LieDerivation^\nabla_X \widetilde{\mu}_0 + \Phi (\widetilde{\mu}_0) + \left(d^\nabla \Phi - \tfrac{1}{2}\iota_X R \right)(\widetilde{\mu}_1), \LieDerivation^\nabla_X \widetilde{\mu}_1 + \Phi (\widetilde{\mu}_1)\right)\label{eq:tilde4}
\end{align}
where $\LieDerivation^\nabla_X = [\iota_X, d^\nabla]$. In particular
\[
\widetilde{\iota}_{(0, \Id_E)} (\widetilde{\mu}_0, \widetilde{\mu}_1) = (\widetilde{\mu}_1, 0).
\]

When $E = \Real_M$ is the trivial line bundle and $\nabla$ is the trivial connection on it, Equations \eqref{eq:tilde2}--\eqref{eq:tilde4} significantly simplify. Indeed, in this case $\gl (E) = \Real_M$ and every derivation $\frkd \in \Gamma (\dev \Real_M)$ is of the form $\frkd = X + f$ where $X = \jd (\frkd)$ as usual, and $f \in C^\infty (M)$. Additionally, $\widetilde{\mu}_0, \widetilde{\mu}_1$ are standard differential forms, $d^\nabla = d$, the standard de Rham differential, and $R = 0$. Hence, in this case,
\begin{align}
\widetilde{\jetd}(\widetilde{\mu}_0, \widetilde{\mu}_1) & = (d \widetilde{\mu}_0, \widetilde{\mu}_0 - d \widetilde{\mu}_1) \label{eq:tilde5}\\
 \widetilde{\iota}_{(X, f)}(\widetilde{\mu}_0, \widetilde{\mu}_1) & = (\iota_X \widetilde{\mu}_0 + f\widetilde{\mu}_1, - \iota_X \widetilde{\mu}_1) \label{eq:tilde6}\\
\widetilde{\LieDerivation}_{(X, f)}(\widetilde{\mu}_0, \widetilde{\mu}_1) & = \left(\LieDerivation_X \widetilde{\mu}_0 + f \widetilde{\mu}_0 + df \wedge \widetilde{\mu}_1, \LieDerivation_X \widetilde{\mu}_1 + f \widetilde{\mu}_1\right) \label{eq:tilde7}
\end{align}

\begin{remark}
The operations \eqref{eq:tilde5}, \eqref{eq:tilde6} and \eqref{eq:tilde7} are also discussed in \cite{Wade2000, Iglesias2001, Iglesias2004}. Our version differs from those by a sign, which is due to our conventions about the isomorphism $\jet_n \Real_M \cong \wedge^n T^\ast M \oplus \wedge^{n-1} T^\ast M$.
\end{remark}

\begin{Definition}
The higher extended generalized tangent bundle is the higher omni-Lie algebroid
\begin{equation}\label{eq:higher_gen}
\omni \cong \left( TM \oplus \Real_M \right) \otimes (\wedge^n T^\ast M \oplus \wedge^{n-1} T^\ast M).
\end{equation}
of the trivial line bundle $\Real_M$.
\end{Definition}

\begin{remark}
In \eqref{eq:higher_gen} we used the vector bundle splittings $\dev \Real_M \cong TM \oplus \Real_M$ and $\jet_n \Real_M \cong \wedge^n T^\ast M \oplus \wedge^{n-1} T^\ast M$, induced by the trivial connection in the trivial line bundle. For $n = 1$, we recover Wade's \emph{extended generalized tangent bundle} \cite{Wade2000}.
\end{remark}

We conclude this section describing explicitly the structure maps of the higher generalized tangent bundle in terms of standard Cartan calculus, as promised. Using \eqref{eq:tilde5}, \eqref{eq:tilde6} and \eqref{eq:tilde7}, we immediately see that, for any pair of sections $(X, f) + (\widetilde \mu_0, \widetilde \mu_1), (Y, g) + (\widetilde \nu_0, \widetilde \nu_1)$ of the higher generalized tangent bundle, we have
\[
\begin{aligned}
& \left((X, f) + (\widetilde \mu_0, \widetilde \mu_1), (Y, g) + (\widetilde \nu_0, \widetilde \nu_1)\right)_+ \\
&= \frac{1}{2}\left(\iota_X \widetilde{\nu}_0 + f\widetilde{\nu}_1 + \iota_Y \widetilde{\mu}_0 + g\widetilde{\mu}_1, - \iota_X \widetilde{\nu}_1 - \iota_Y \widetilde{\mu}_1\right)
\end{aligned}
\]
and
\[
\begin{aligned}
& \Dorfman{(X, f) + (\widetilde \mu_0, \widetilde \mu_1), (Y, g) + (\widetilde \nu_0, \widetilde \nu_1)} = \left([X,Y], X(g) - Y(f)\right) \\
& +
\left(\LieDerivation_X \widetilde{\nu}_0 + f \widetilde{\nu}_0 + df \wedge \widetilde{\nu}_1 - \iota_Y d \widetilde \mu_0 - g (\widetilde \mu_0 - d \widetilde \mu_1), \LieDerivation_X \widetilde{\nu}_1 + f \widetilde{\nu}_1 + \iota_Y (\widetilde \mu_0 - d \widetilde \mu_1)\right)
\end{aligned}
\]
These formulas generalize to the case $n \geq 1$ those of \cite{Wade2000} (in the case $n = 1$), up to a conventional sign.

\begin{remark}
Recall that the graph of a map $B = \dev E \to \jet_n E$ is a higher Dirac-Jacobi structure if and only if $B$ is the flat map
\[
\mu_\flat : \dev E \to \jet_n E, \quad \frkd \mapsto \iota_\frkd \mu
\]
of a closed, hence exact form $\mu \in \Omega^{n+1}_{\jet E}$. We now apply this result to the case $E = \Real_M$ to see, from \eqref{eq:tilde5} and \eqref{eq:tilde6}, that the graph of a map $B : TM \oplus \Real_M \to \wedge^n T^\ast M \oplus \wedge^{n-1} T^\ast M$ is a higher Dirac-Jacobi structure if and only if $B$ is the flat map
\[
(\widetilde \mu_0, \widetilde \mu_1)_\flat : TM \oplus \Real_M \to \wedge^n T^\ast M \oplus \wedge^{n-1} T^\ast M, \quad (X, f) \mapsto \widetilde \iota_{(X,f)} (\widetilde \mu_0, \widetilde \mu_1)
\]
of a pair $(\widetilde \mu_0, \widetilde \mu_1) \in \Omega^{n+1} (M) \oplus \Omega^n (M)$ such that $\widetilde \jetd (\widetilde \mu_0, \widetilde \mu_1) = 0$, or, which is the same,
$
\widetilde \mu_0 = d \widetilde \mu_1.
$
\end{remark}

\color{black}
\appendix

\section{Proof of Theorem \ref{theor:n<dimM}}\label{appendix:proof}

In this proof we understand the embedding
\[
\e_\bullet : \Omega^\bullet (M, E) \to \Omega^\bullet_{\jet E}
\]
and interpret $\Omega^\bullet (M, E)$ as a graded $\Omega^\bullet (M)$-submodule in $\Omega^\bullet_{\jet E}$. We work in local coordinates. So, let $\dim M = m$, and let $x^1, \ldots, x^m$ be coordinates on $M$, and $\partial_i := \frac{\partial}{\partial x^i}$ the associated coordinate vector fields. Additionally, let $(\varepsilon_\alpha)$ be a local frame of sections of $E$. Locally, a derivation $\frkd \in \Gamma (\dev E)$ can be uniquely written as
$
\frkd = X^i \partial_i + X^\gamma_\beta \varepsilon^{\beta}_\gamma,
$
where,
\begin{enumerate}
\item[(1)]$\varepsilon^{\beta}_\gamma$ is the unique endomorphism $\Gamma (E) \to \Gamma (E)$ such that $\varepsilon^{\beta}_\gamma (\varepsilon_\alpha) = \delta^\beta_\alpha \varepsilon_\gamma$, for all $\alpha$,
\item[(2)] abusing the notation, we also denote by $\partial_i$ the unique derivation with symbol the $i$-th coordinate vector field, and such that $\partial_i \varepsilon_\alpha = 0$ for all $\alpha$,
\end{enumerate}
and the $X^i, X^\alpha_\beta$ are local functions. It will be also useful to consider the coordinate volume form
\[
\operatorname{vol} := dx^1 \wedge \cdots \wedge dx^m,
\]
and, for every (skewsymmetric) multiindex $I = i_1 \cdots i_k$, the $m-k$ form
\[
\operatorname{vol}_{I} = \operatorname{vol}_{i_1 \cdots i_k} := \iota_{\partial_{i_1}} \cdots \iota_{\partial_{i_k}} \operatorname{vol}.
\]
In the following we denote by $|I| := k$ the \emph{lenght} of a multiindex $I = i_1 \cdots i_k$.

It easily follows from \eqref{eq:iso} that $\Omega^\bullet_{\jet E}$ is locally generated, as a graded $\Omega^\bullet (M)$-module, by $(\varepsilon_\alpha, \jetd \varepsilon_\alpha)$. Hence, it is generated, as a $C^\infty (M)$-module, by the following forms:
\[
\mu_{I, \alpha} := \operatorname{vol}_{I} \otimes \varepsilon_\alpha, \quad \text{and} \quad \nu_{J, \alpha} := \operatorname{vol}_{J}\wedge \jetd \varepsilon_\alpha.
\]

Now, let $1 < n < m + 1$, and let $B : \jet_n E \to \dev E$ be a vector bundle map. Then $B$ is completely determined by
\[
B(\mu_{I, \alpha}) = B^i_{I, \alpha} \partial_i + B^\gamma_{I, \alpha\beta} \varepsilon^\beta_\gamma, \quad |I| = m-n,
\]
and
\[
B(\nu_{J, \alpha}) = C^i_{J, \alpha} \partial_i + C^\gamma_{J, \alpha\beta} \varepsilon^\beta_\gamma, \quad |J| = m - n + 1.
\]
Compute
\[
\begin{aligned}
\conpairing{B(\mu_{I, \alpha}), \mu_{I', \alpha'}}_{\jet E} & = B^i_{I, \alpha} \mu_{iI', \alpha'} \\
\conpairing{B(\mu_{I, \alpha}), \nu_{J, \alpha'}}_{\jet E} & = B^i_{I, \alpha} \nu_{iJ, \alpha'} + (-1)^{n-1} B^\gamma_{I, \alpha\alpha'} \mu_{J, \gamma} \\
\conpairing{B(\nu_{J, \alpha}), \mu_{I, \alpha'}}_{\jet E}& = C^i_{J, \alpha} \mu_{iI, \alpha'}  \\
\conpairing{B(\nu_{J, \alpha}), \nu_{J', \alpha'}}_{\jet E} & = C^i_{J, \alpha} \nu_{iJ', \alpha'} + (-1)^{n-1} C^\gamma_{J, \alpha\alpha'} \mu_{J', \gamma}.
\end{aligned}
\]

Denote by $L$ the graph of $B$ and recall that $L$ is an isotropic subbundle of $\omni$ if and only if
\begin{equation}\label{eq:Bmunu}
\conpairing{B(\mu), \nu}_{\jet E} + \conpairing{B(\nu), \mu}_{\jet E} = 0,\quad \text{for all $\mu, \nu \in \Omega^{n}_{\jet E}$}.
\end{equation}
It is clear that, if $B = 0$, then $L = 0 \oplus \jet_n E$ is isotropic, and it is easy to check that it is actually maximal isotropic and involutive, i.e.~a higher Dirac-Jacobi structure. Conversely, let $L$ be isotropic. Then, using \eqref{eq:Bmunu} with $\mu, \nu$ chosen among the generators $\mu_{I, \alpha}, \nu_{J, \alpha}$, we find, e.g.:
\[
0 = \conpairing{B(\mu_{I, \alpha}), \mu_{I', \alpha'}}_{\jet E} + \conpairing{B(\mu_{I', \alpha'}), \mu_{I, \alpha}}_{\jet E} = \left(\delta_{I'}^{I''} \delta_{\alpha'}^{\alpha''}B^i_{I, \alpha} + \delta_{I}^{I''} \delta_{\alpha}^{\alpha''}B^i_{I', \alpha'} \right)\mu_{iI'', \alpha''}.
\]
As the lenght of $I$ is at least $1$, it follows that $B^i_{I, \alpha} = 0$. One can show that the other coefficients $B^\gamma_{I, \alpha \beta}, C^i_{J, \alpha}, C^\gamma_{J, \alpha\beta}$ all vanish, in a similar way. We leave the obvious details to the reader.

\section*{Acknowledgements}
Part of this work was completed while YB and TZ were visiting the Chern Institute of Mathematics in September 2015. YB and TZ are grateful to Prof. Chengming Bai for his kind invitation and hospitality. We would like to thank Prof. Zhangju Liu for giving us the key idea at the very beginning of the work. We thank Prof. Zhuo Chen and  Prof. Yunhe Sheng for explaining to us the concept of omni-Lie algebroids. YB also thanks Department of Mathematics at Penn State University for its hospitality and China Scholarship Council (No.201608360038) for financial support. LV thanks YB and TZ for inviting him to join this project.


\end{document}